\numberwithin{table}{section}
\newcommand{\A}{\mathrm{A}}    \newcommand{\Aut}{\mathrm{Aut}}
 \newcommand{\bbF}{\mathbb{F}} 
\newcommand{\D}{\mathrm{D}}
\newcommand{\G}{\mathrm{G}} \newcommand{\GaG}{\mathrm{\Gamma G}} \newcommand{\GaL}{\mathrm{\Gamma L}} \newcommand{\GaSp}{\mathrm{\Gamma Sp}}    \newcommand{\GL}{\mathrm{GL}}
\newcommand{\lefthat}{\scalebox{1.3}[1]{\text{$\hat{~}$}}}
\newcommand{\M}{\mathrm{M}} \newcommand{\magma}{\textsc{Magma}}
\newcommand{\N}{\mathrm{N}}
\newcommand{\Out}{\mathrm{Out}}
  \newcommand{\PGL}{\mathrm{PGL}} \newcommand{\PGaL}{\mathrm{P\Gamma L}}      \newcommand{\PSL}{\mathrm{PSL}} \newcommand{\PSiL}{\mathrm{P\Sigma L}}    \newcommand{\PSU}{\mathrm{PSU}}
\newcommand{\SiL}{\mathrm{\Sigma L}}  \newcommand{\SL}{\mathrm{SL}}  \newcommand{\Soc}{\mathrm{Soc}} \newcommand{\Sp}{\mathrm{Sp}}  \newcommand{\SU}{\mathrm{SU}}  \newcommand{\Sy}{\mathrm{S}}
\newtheorem{theorem}{Theorem}[section]
\newtheorem{lemma}[theorem]{Lemma}
\newtheorem{problem}[theorem]{Problem}
\theoremstyle{definition}
\newtheorem*{remark}{Remark}
\begin{document}

\title[Factorizations of almost simple linear groups]{Factorizations of almost simple linear groups}

\author[Li]{Cai Heng Li}
\address{(Li) SUSTech International Center for Mathematics and Department of Mathematics\\Southern University of Science and Technology\\Shenzhen 518055\\Guangdong\\P.~R.~China}
\email{lich@sustech.edu.cn}

\author[Wang]{Lei Wang}
\address{(Wang) School of Mathematics and Statistics\\Yunnan University\\Kunming 650091\\Yunnan\\P.~R.~China}
\email{wanglei@ynu.edu.cn}

\author[Xia]{Binzhou Xia}
\address{(Xia) School of Mathematics and Statistics\\The University of Melbourne\\Parkville 3010\\VIC\\Australia}
\email{binzhoux@unimelb.edu.au}

\begin{abstract}
This is the first one in a series of papers classifying the factorizations of almost simple groups with nonsolvable factors.
In this paper we deal with almost simple linear groups.

\textit{Key words:} group factorizations; almost simple groups

\textit{MSC2020:} 20D40, 20D06, 20D08
\end{abstract}

\maketitle

\section{Introduction}

An expression $G=HK$ of a group $G$ as the product of subgroups $H$ and $K$ is called a \emph{factorization} of $G$, where $H$ and $K$ are called \emph{factors}. A group $G$ is said to be \emph{almost simple} if $S\leqslant G\leqslant\Aut(S)$ for some nonabelian simple group $S$, where $S=\Soc(G)$ is the \emph{socle} of $G$. In this paper, by a factorization of an almost simple group we mean that none its factors contains the socle. The main aim of this paper is to solve the long-standing open problem:

\begin{problem}\label{PrbXia1}
Classify factorizations of finite almost simple groups.
\end{problem}

Determining all factorizations of almost simple groups is a fundamental problem in the theory of simple groups, which was proposed by Wielandt~\cite[6(e)]{Wielandt1979} in 1979 at The Santa Cruz Conference on Finite Groups. It also has numerous applications to other branches of mathematics such as combinatorics and number theory, and has attracted considerable attention in the literature.
In what follows, all groups are assumed to be finite if there is no special instruction.

The factorizations of almost simple groups of exceptional Lie type were classified by Hering, Liebeck and Saxl~\cite{HLS1987}\footnote{In part~(b) of Theorem~2 in~\cite{HLS1987}, $A_0$ can also be $\G_2(2)$, $\SU_3(3)\times2$, $\SL_3(4).2$ or $\SL_3(4).2^2$ besides $\G_2(2)\times2$.} in 1987.
For the other families of almost simple groups, a landmark result was achieved by Liebeck, Praeger and Saxl~\cite{LPS1990} thirty years ago, which classifies the maximal factorizations of almost simple groups. (A factorization is said to be \emph{maximal} if both the factors are maximal subgroups.)
Then factorizations of alternating and symmetric groups are classified in~\cite{LPS1990}, and factorizations of sporadic almost simple groups are classified in~\cite{Giudici2006}.
This reduces Problem~\ref{PrbXia1} to the problem on classical groups of Lie type.
Recently, factorizations of almost simple groups with a factor having at least two nonsolvable composition factors are classified in~\cite{LX2019}\footnote{In Table~1 of~\cite{LX2019}, the triple $(L,H\cap L,K\cap L)=(\Sp_{6}(4),(\Sp_2(4)\times\Sp_{2}(16)).2,\G_2(4))$ is missing, and for the first two rows $R.2$ should be $R.P$ with $P\leqslant2$.}, and those with a factor being solvable are described in~\cite{LX} and~\cite{BL}.

As usual, for a finite group $G$, we denote by $G^{(\infty)}$ the smallest normal subgroup of $X$ such that $G/G^{(\infty)}$ is solvable.
For factorizations $G=HK$ with nonsolvable factors $H$ and $K$ such that $L=\Soc(G)$ is a classical group of Lie type, the triple $(L,H^{(\infty)},K^{(\infty)})$ is described in~\cite{LWX}. Based on this work, in the present paper we characterize the triples $(G,H,K)$ such that $G=HK$ with $H$ and $K$ nonsolvable, and $G$ is a linear group.
As important special cases, groups that are transitive on the set of $1$-spaces and antiflags, respectively, are classified in the literature (see~\cite{CK1979,Kantor,Liebeck1987}).

For groups $H,K,X,Y$, we say that $(H,K)$ contains $(X,Y)$ if $H\geqslant X$ and $K\geqslant Y$, and that $(H,K)$ \emph{tightly contains} $(X,Y)$ if in addition $H^{(\infty)}=X^{(\infty)}$ and $K^{(\infty)}=Y^{(\infty)}$. 
Our main result is the following Theorem~\ref{ThmLinear}. 
Note that it is elementary to determine the factorizations of $G/L$ as this group has relatively simple structure (and in particular is solvable).


\begin{theorem}\label{ThmLinear}
Let $G$ be an almost simple group with socle $L=\PSL_n(q)$, where $n\geqslant2$ and $(n,q)\neq(2,2)$ or $(2,3)$, and let $H$ and $K$ be nonsolvable subgroups of $G$ not containing $L$. Then $G=HK$ if and only if (with $H$ and $K$ possibly interchanged) $G/L=(HL/L)(KL/L)$ and $(H,K)$ tightly contains $(X^\alpha,Y^\alpha)$ for some $(X,Y)$ in Table~$\ref{TabLinear}$ and $\alpha\in\Aut(L)$.
\end{theorem}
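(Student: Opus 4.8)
The plan is to reduce Theorem~\ref{ThmLinear} to the work of~\cite{LWX}, which already describes the possible triples $(L,H^{(\infty)},K^{(\infty)})$ arising from a factorization $G=HK$ with $L=\PSL_n(q)$ and $H,K$ nonsolvable. Thus the bulk of the argument will consist of tracking how a factorization of $G$ constrains $H$ and $K$ themselves, not merely their last terms of the derived series, and of proving that the conditions listed are sufficient. First I would establish the easy direction: if $(H,K)$ tightly contains $(X^\alpha,Y^\alpha)$ for some entry of Table~\ref{TabLinear} and $G/L=(HL/L)(KL/L)$, then $G=HK$. Here the key point is the standard lemma that $G=HK$ if and only if $|G||H\cap K|=|H||K|$, together with the fact that for a factorization $G=HK$ one has $L=(H\cap L)(K\cap L)$ up to the contribution of $G/L$; replacing $X,Y$ by larger subgroups $H,K$ with the same $(\infty)$-subgroups changes the relevant indices only by divisors of $|G/L|$, which is accounted for by the hypothesis on $G/L$. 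This direction is essentially bookkeeping once the table is known to consist of genuine factorizations.

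For the forward direction, suppose $G=HK$ with $H,K$ nonsolvable and neither containing $L$. I would first pass to the factorization $L=(H\cap L)(K\cap L)$ of the socle (using that $G/L$ is solvable, so $H\cap L$ and $K\cap L$ remain nonsolvable — this uses $(n,q)\neq(2,2),(2,3)$ so that $L$ is simple and the composition factors are controlled). Then $H^{(\infty)}=(H\cap L)^{(\infty)}$ and similarly for $K$, so $(L,H^{(\infty)},K^{(\infty)})$ is one of the triples classified in~\cite{LWX}. The task is then, for each such triple, to determine exactly which subgroups $H,K$ of $G$ with those $(\infty)$-subgroups actually yield $G=HK$, and to check that the answer is precisely: $(H,K)$ tightly contains some $(X^\alpha,Y^\alpha)$ from the table and $G/L=(HL/L)(KL/L)$. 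Concretely, for a fixed triple I would identify the overgroups of $H^{(\infty)}$ and $K^{(\infty)}$ in $\Aut(L)$ — these lie in a small number of maximal subgroups by the maximal factorizations of~\cite{LPS1990} — and compute the index relations modulo the action of $\Aut(L)$, reducing to the orbit of $\alpha$.

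The main obstacle will be the case analysis for the sufficiency and the exact determination of which extensions by subgroups of $\Out(L)$ preserve the factorization. There are several delicate families — notably the parabolic factorizations connected to transitivity on $1$-spaces and on antiflags (references~\cite{CK1979,Kantor,Liebeck1987}), and the factorizations involving $\PSp$, $\GU$, or field/graph automorphisms — where $H\cap L$ or $K\cap L$ need not be maximal in $L$ and where subtle congruences in $q$ and $n$ decide whether the index product matches $|G|$. For these I expect to need a mixture of explicit subgroup-order computations, arguments with the permutation character or with $p$-parts of indices (Zsigmondy primes being the standard tool), and, for a bounded list of small $(n,q)$, verification in \magma. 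The structure of the proof will therefore be: (i) the elementary sufficiency lemma; (ii) reduction to~\cite{LWX} for the socle; (iii) for each triple from~\cite{LWX}, a determination of the admissible pairs $(H,K)$ up to $\Aut(L)$-conjugacy, culminating in the entries of Table~\ref{TabLinear}; and (iv) disposal of small cases by computation.
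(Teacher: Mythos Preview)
Your overall architecture matches the paper's: establish sufficiency by showing each $(X,Y)$ in the table is a factor pair, then for necessity invoke~\cite{LWX} to pin down $(L,H^{(\infty)},K^{(\infty)})$ and run a case analysis. But one step is wrong as stated. You write that from $G=HK$ you ``pass to the factorization $L=(H\cap L)(K\cap L)$ of the socle.'' This does not follow: $G=HK$ only gives $HK\supseteq L$, not $(H\cap L)(K\cap L)=L$. In fact the latter fails for entries in the table itself---for instance in Row~10, $Z=\PSL_3(4).2=\PGL_2(7)\cdot\M_{10}$ is a genuine factorization, but $\PSL_3(4)\neq\PSL_2(7)\cdot\A_6$; this is exactly why $Z$ must be $L.2$ rather than $L$. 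The paper avoids this by working throughout with the \emph{factor pair} condition $HK\supseteq L$ (Lemma~\ref{LemXia01}), and~\cite{LWX} is applied directly to $G=HK$, not to a putative factorization of $L$. Your identity $H^{(\infty)}=(H\cap L)^{(\infty)}$ is correct and is all you actually need to feed into~\cite{LWX}, so the fix is simply to drop the claim about $L$.

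Your sufficiency sketch is also looser than necessary. Rather than tracking how indices change ``by divisors of $|G/L|$,'' the paper's route is cleaner: show directly that each tabulated $(X,Y)$ satisfies $XY\supseteq L$ (Sections~\ref{SecLinear01}--\ref{SecLinear02}); then any $(H,K)$ containing $(X,Y)$ also satisfies $HK\supseteq L$, and Lemma~\ref{LemXia01} converts this plus $G/L=(HL/L)(KL/L)$ into $G=HK$. Finally, be aware that the case analysis you anticipate has one genuinely nontrivial piece: for Rows~4--7 and~14--15 (where $H^{(\infty)}$ is a field-extension subgroup and $K^{(\infty)}=\SL_{n-1}(q)$), one must show $H$ is forced to contain a field or field-times-graph automorphism of order $2f$. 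The paper does this by an explicit antiflag-transitivity argument (the lemma in Section~6), not by index arithmetic or Zsigmondy primes; you should expect to need something of that flavour.
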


\begin{remark}
Here are some remarks on Table~\ref{TabLinear}:
\begin{enumerate}[{\rm(I)}]
\item The column $Z$ gives the smallest almost simple group with socle $L$ that contains $X$ and $Y$. In other words, $Z=\langle L,X,Y\rangle$.
It turns out that $Z=XY$ for all pairs $(X,Y)$.
\item The groups $X$, $Y$ and $Z$ are described in the corresponding lemmas whose labels are displayed in the last column. 
\item The description of groups $X$ and $Y$ are up to conjugations in $Z$ (see Lemma~\ref{LemXia04}(b) and Lemma~\ref{LemXia03}).
\end{enumerate}
\end{remark}

\begin{table}[htbp]
\captionsetup{justification=centering}
\caption{$(X,Y)$ for linear groups}\label{TabLinear}
\begin{tabular}{|l|l|l|l|l|l|l|}
\hline
Row & $Z$ & $X$ & $Y$ & Remarks & Lemma\\
\hline
1 & $\PSL_n(q)$ & $\lefthat\SL_a(q^b)$, $\lefthat\Sp_a(q^b)'$ & $q^{n-1}{:}\SL_{n-1}(q)$ & $n=ab$ & \ref{LemLinear01}, \ref{LemLinear02}\\
2 & $\PSL_n(q)$ & $\G_2(q^b)'$ & $q^{n-1}{:}\SL_{n-1}(q)$ & $n=6b$, & \ref{LemLinear04}\\
  &   &   &   &  $q$ even &  \\
\hline
3 & $\PSL_n(q)$ & $\lefthat\Sp_n(q)'$ & $\SL_{n-1}(q)$ & & \ref{LemLinear05}\\
4 & $\SL_{2m}(2)$ & $\SiL_m(4)$, $\GaSp_m(4)$ & $\SL_{2m-1}(2)$ & & \ref{LemLinear06}, \ref{LemLinear07}\\
5 & $\SL_{2m}(2).2$ & $\SL_m(4).2$, $\Sp_m(4).2$ & $\SL_{2m-1}(2).2$ & & \ref{LemLinear16}, \ref{LemLinear17}\\
6 & $\PSiL_{2m}(4)$ & $\SiL_m(16)/d$, $\GaSp_m(16)$ & $\SiL_{2m-1}(4)$ & $d=(m,3)$ & \ref{LemLinear06}, \ref{LemLinear07}\\
7 & $\PSL_{2m}(4).2$ & $(\SL_m(16).4)/d$, $\Sp_m(16).4$ & $\SL_{2m-1}(4).2$ & $d=(m,3)$ & \ref{LemLinear16}, \ref{LemLinear17}\\
8 & $\PSL_6(q)$ & $\G_2(q)$ & $\SL_5(q)$ & $q$ even & \ref{LemLinear09}\\
\hline
9 & $\PSL_2(9)$ & $\PSL_2(5)$ & $\A_5$ & & \ref{LemLinear10}\\
10 & $\PSL_3(4).2$ & $\PGL_2(7)$ & $\M_{10}$ & & \ref{LemLinear11}\\
11 & $\SL_4(2)$ & $\SL_3(2)$, $2^3{:}\SL_3(2)$ & $\A_7$ & & \ref{LemLinear12}\\
12 & $\PSL_4(3)$ & $\Sy_5$, $4\times\A_5$, $2^4{:}\A_5$ & $3^3{:}\SL_3(3)$ & & \ref{LemLinear13}\\
13 & $\PSL_6(3)$ & $\PSL_2(13)$ & $3^5{:}\SL_5(3)$ & & \ref{LemLinear14}\\
14 & $\SL_{12}(2)$ & $\G_2(4).2$ & $\SL_{11}(2)$ & & \ref{LemLinear15}, \ref{LemLinear18}\\
15 & $\PSiL_{12}(4)$ & $\G_2(16).4$ & $\SL_{11}(4).2$ & & \ref{LemLinear15}, \ref{LemLinear18}\\
\hline
\end{tabular}
\vspace{3mm}
\end{table}

\section{Preliminaries}

In this section we collect some elementary facts regarding group factorizations.

\begin{lemma}\label{LemXia01}
Let $G$ be a group, let $H$ and $K$ be subgroups of $G$, and let $N$ be a normal subgroup of $G$. Then $G=HK$ if and only if $HK\supseteq N$ and $G/N=(HN/N)(KN/N)$.
\end{lemma}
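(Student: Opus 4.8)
The plan is to prove the two implications separately, working throughout with the natural quotient homomorphism $\pi\colon G\to G/N$, $g\mapsto gN$. I will freely use the standard identifications $\pi(H)=HN/N$ and $\pi(K)=KN/N$ (valid since $N\leqslant HN$, so every coset in $HN/N$ has the form $hN$ with $h\in H$), together with the elementary fact that $\pi$ is a surjective homomorphism, whence $\pi(AB)=\pi(A)\pi(B)$ for any subsets $A,B\subseteq G$.

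For the forward direction I suppose $G=HK$. Since $N\leqslant G=HK$, the containment $HK\supseteq N$ is immediate. Applying $\pi$ to both sides of $G=HK$ and using surjectivity then gives $G/N=\pi(G)=\pi(HK)=\pi(H)\pi(K)=(HN/N)(KN/N)$, which is exactly the asserted factorization of the quotient. For the backward direction I assume $HK\supseteq N$ and $G/N=(HN/N)(KN/N)$, and the key reduction is to show first $G=HKN$ and then $HKN=HK$. Given an arbitrary $g\in G$, the hypothesis on the quotient lets me write $gN=(hN)(kN)=hkN$ for some $h\in H$ and $k\in K$, so $g\in hkN\subseteq HKN$; as $HKN\subseteq G$ trivially, this yields $G=HKN$. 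To collapse the $N$-factor I use normality of $N$ to rewrite a typical $hkn\in HKN$ as $hn'k$ with $n'=knk^{-1}\in N$, and then invoke $N\subseteq HK$ to write $n'=h'k'$, obtaining $hkn=hh'k'k\in HK$. Hence $HKN=HK$ and $G=HK$.

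The only genuinely delicate point is this final collapse $HKN=HK$, and I would stress that it relies on both hypotheses acting in tandem: normality of $N$ is what permits sliding the $N$-factor leftward past $k$, while the containment $N\subseteq HK$ is what lets me absorb the resulting $n'\in N$ into an $H$-factor and a $K$-factor. Neither hypothesis alone suffices, which is precisely why the condition $HK\supseteq N$—automatic in the forward direction—must be imposed explicitly in the converse. By contrast, the forward direction and the lift $G=HKN$ are routine. I note finally that the argument uses no counting or finiteness, so the statement holds for arbitrary, possibly infinite, groups.
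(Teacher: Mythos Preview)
Your proof is correct and follows essentially the same approach as the paper: both directions are handled identically in spirit, with the backward direction reducing $G$ to $HNK$ (equivalently $HKN$) via the quotient factorization and normality of $N$, and then absorbing the middle $N$-factor using $N\subseteq HK$ to conclude $G=HK$. The paper phrases the absorption at the level of sets as $HNK\subseteq H(HK)K=HK$, while you carry it out elementwise, but the argument is the same.
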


\begin{proof}
If $G=HK$, then $HK\supseteq N$, and taking the quotient modulo $N$ we obtain
\[
G/N=(HN/N)(KN/N).
\]
Conversely, suppose that $HK\supseteq N$ and $G/N=(HN/N)(KN/N)$. Then 
\[
G=(HN)(KN)=HNK 
\]
as $N$ is normal in $G$. Since $N\subseteq HK$, it follows that $G=HNK\subseteq H(HK)K=HK$, which implies $G=HK$.
\end{proof}

Let $L$ be a nonabelian simple group. We say that $(H,K)$ is a \emph{factor pair} of $L$ if $H$ and $K$ are subgroups of $\Aut(L)$ such that $HK\supseteq L$. For an almost simple group $G$ with socle $L$ and subgroups $H$ and $K$ of $G$, Lemma~\ref{LemXia01} shows that $G=HK$ if and only if $G/L=(HL/L)(KL/L)$ and $(H,K)$ is a factor pair.
As the group $G/L$ has a simple structure (and in particular is solvable), it is elementary to determine the factorizations of $G/L$.
Thus to know all the factorizations of $G$ is to know all the factor pairs of $L$.
Note that, if $(H,K)$ is a factor pair of $L$, then any pair of subgroups of $\Aut(L)$ containing $(H,K)$ is also a factor pair of $L$.
Hence we have the following:

\begin{lemma}\label{LemXia02}
Let $G$ be an almost simple group with socle $L$, and let $H$ and $K$ be subgroups of $G$ such that $(H,K)$ contains some factor pair of $L$. Then $G=HK$ if and only if $G/L=(HL/L)(KL/L)$.
\end{lemma}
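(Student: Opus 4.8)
The plan is to deduce this directly from Lemma~\ref{LemXia01} by taking the normal subgroup there to be $N=L$. Since $G$ is almost simple with socle $L$, the subgroup $L$ is normal in $G$, so Lemma~\ref{LemXia01} applies and gives that $G=HK$ if and only if both $HK\supseteq L$ and $G/L=(HL/L)(KL/L)$ hold. Thus the whole task reduces to showing that, under the standing hypothesis that $(H,K)$ contains a factor pair of $L$, the first of these two conditions is automatic; the equivalence then collapses to the single remaining condition, which is exactly the claim.

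For that reduction I would unpack the hypothesis: that $(H,K)$ contains some factor pair $(X,Y)$ of $L$ means there are subgroups $X\leqslant H$ and $Y\leqslant K$ of $\Aut(L)$ with $XY\supseteq L$. The key (and essentially only) step is the set-theoretic inclusion of products: from $X\subseteq H$ and $Y\subseteq K$ one gets $XY\subseteq HK$, and hence $HK\supseteq XY\supseteq L$. This is precisely the observation recorded in the remark preceding the lemma, that any pair of subgroups of $\Aut(L)$ containing a factor pair is again a factor pair. With $HK\supseteq L$ now guaranteed, Lemma~\ref{LemXia01} yields $G=HK\iff G/L=(HL/L)(KL/L)$.

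The argument has no genuine obstacle; if there is a point requiring care, it is only that ``factor pair'' and ``contains a factor pair'' are conditions on product \emph{sets} $HK$ and $XY$ rather than on subgroups, so the inclusion $HK\supseteq L$ should be verified at the level of subsets (using $X\subseteq H$ and $Y\subseteq K$) rather than via any subgroup-lattice reasoning. In effect Lemma~\ref{LemXia02} is a streamlined restatement of Lemma~\ref{LemXia01} tailored to the almost simple setting, where the normality of $L$ in $G$ makes $N=L$ the natural choice and the containment of a factor pair supplies the hypothesis $HK\supseteq L$ for free.
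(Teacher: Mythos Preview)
Your proposal is correct and follows essentially the same approach as the paper: apply Lemma~\ref{LemXia01} with $N=L$, and use that $(H,K)$ containing a factor pair forces $HK\supseteq L$. The paper gives exactly this argument in the paragraph preceding the lemma (noting that any pair containing a factor pair is again a factor pair), and states the lemma as an immediate consequence without a separate proof.
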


In light of Lemma~\ref{LemXia02}, the key to determine the factorizations of $G$ with nonsolvable factors is to determine the minimal ones (with respect to the containment) among factor pairs of $L$ with nonsolvable subgroups.

\begin{lemma}\label{LemXia03}
Let $L$ be a nonabelian simple group, and let $(H,K)$ be a factor pair of $L$.
Then $(H^\alpha,K^\alpha)$ and $(H^x,K^y)$ are factor pairs of $L$ for all $\alpha\in\Aut(L)$ and $x,y\in L$.
\end{lemma}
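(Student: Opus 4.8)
The plan is to treat the two assertions in turn, both of which come down to the defining inclusion $HK\supseteq L$ together with the fact that $L$ is normal in $\Aut(L)$. In fact the second assertion reduces to the first, so the only genuine content is a short double-coset manipulation.

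For $\alpha\in\Aut(L)$: conjugation by $\alpha$ is an inner automorphism of $\Aut(L)$ (note $\alpha\in\Aut(L)$), so it carries the subgroups $H$ and $K$ to subgroups $H^\alpha$ and $K^\alpha$ of $\Aut(L)$, and since $L\trianglelefteq\Aut(L)$ it fixes $L$ setwise. Hence $H^\alpha K^\alpha=(HK)^\alpha\supseteq L^\alpha=L$, so $(H^\alpha,K^\alpha)$ is a factor pair of $L$. Next, for $x,y\in L$ observe that $(H^x,K^y)=(H^{xy^{-1}},K)^y$ (conjugating both components by $y\in L\leqslant\Aut(L)$), so by the case just proved it suffices to show that $(H^z,K)$ is a factor pair of $L$ for every $z\in L$.

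To prove the latter: $H^z$ and $K$ are subgroups of $\Aut(L)$, and since $z\in L\subseteq HK$ we may write $z=h_0k_0$ with $h_0\in H$ and $k_0\in K$, whence $HzK=(Hh_0)(k_0K)=HK$. Now let $\ell\in L$ be arbitrary; then $z\ell\in L\subseteq HK=HzK$, so $z\ell=hzk$ for some $h\in H$ and $k\in K$, and rearranging gives $\ell=(z^{-1}hz)k\in H^zK$. As $\ell$ was arbitrary, $L\subseteq H^zK$, so $(H^z,K)$ is a factor pair of $L$, completing the proof.

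This argument involves no real obstacle; the one point worth stating carefully is the identity $HzK=HK$ for $z\in HK$, after which everything is immediate from the normality of $L$ in $\Aut(L)$ and elementary coset arithmetic.
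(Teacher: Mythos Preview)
Your proof is correct and uses essentially the same idea as the paper: the key identity $HzK=HK$ for $z\in HK$ (applied with $z=xy^{-1}$), together with $L^\alpha=L$. The paper's version is slightly more direct, computing $H^xK^y=x^{-1}Hxy^{-1}Ky=x^{-1}HKy\supseteq x^{-1}Ly=L$ in one line rather than first reducing to the case $(H^z,K)$, but the content is the same.
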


\begin{proof}
It is evident that $H^\alpha K^\alpha=(HK)^\alpha\supseteq L^\alpha=L$. Hence $(H^\alpha,K^\alpha)$ is a factor pair.
Since $xy^{-1}\in L\subseteq HK$, there exist $h\in H$ and $k\in K$ such that $xy^{-1}=hk$. Therefore, 
\[
H^xK^y=x^{-1}Hxy^{-1}Ky=x^{-1}HhkKy=x^{-1}HKy\supseteq x^{-1}Ly=L,
\]
which means that $(H^x,K^y)$ is a factor pair.
\end{proof}

The next lemma is~\cite[Lemma~2(i)]{LPS1996}.

\begin{lemma}\label{LemXia05}
Let $G$ be an almost simple group with socle $L$, and let $H$ and $K$ be subgroups of $G$ not containing $L$. If $G=HK$, then $HL\cap KL=(H\cap KL)(K\cap HL)$. 
\end{lemma}

The following lemma implies that we may consider specific representatives of a conjugacy class of subgroups when studying factorizations of a group.

\begin{lemma}\label{LemXia04}
Let $G=HK$ be a factorization. Then for all $x,y\in G$ we have $G=H^xK^y$ with $H^x\cap K^y\cong H\cap K$.
\end{lemma}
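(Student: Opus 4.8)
The plan is to establish the two assertions by the same elementary manipulation already used in the proof of Lemma~\ref{LemXia03}, exploiting the fact that conjugation by an element of $H$ (respectively $K$) stabilizes $H$ (respectively $K$).

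First I would prove $G=H^xK^y$. Since $xy^{-1}\in G=HK$, write $xy^{-1}=hk$ with $h\in H$ and $k\in K$. Then
\[
H^xK^y = x^{-1}Hx\,y^{-1}Ky = x^{-1}H(hk)Ky = x^{-1}HKy = x^{-1}Gy = G,
\]
using $Hh=H$, $kK=K$, and the fact that left and right translation by $x,y\in G$ permute $G$.

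For the isomorphism $H^x\cap K^y\cong H\cap K$, the key observation is that the relation $xy^{-1}=hk$ gives $x=hky$, whence $H^x=H^{hky}=H^{ky}$ because $h^{-1}Hh=H$. Consequently
\[
H^x\cap K^y = H^{ky}\cap K^y = (H^k\cap K)^y = (H\cap K)^{ky},
\]
where the second equality is conjugation by $y$ and the last uses $K=K^k$ (so that $H^k\cap K=H^k\cap K^k=(H\cap K)^k$). Thus $H^x\cap K^y$ is a $G$-conjugate of $H\cap K$, in particular isomorphic to it.

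There is no genuine obstacle here; the only point worth flagging is that the order identity $|A\cap B|=|A|\,|B|/|G|$, valid for any factorization $G=AB$, would yield only $|H^x\cap K^y|=|H\cap K|$, which is weaker than the claimed isomorphism. The displayed computation is precisely what upgrades ``equal order'' to ``conjugate, hence isomorphic'', so I would present the argument in that form rather than via orders.
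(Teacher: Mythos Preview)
Your proof is correct and follows essentially the same approach as the paper: both write $xy^{-1}=hk$ and use $Hh=H$, $kK=K$ to obtain the factorization, then reduce $H^x\cap K^y$ to a conjugate of $H\cap K$ via the same two simplifications. Your version even makes the conjugating element $ky$ explicit, whereas the paper passes through $(H^{xy^{-1}}\cap K)^y$ and records only the isomorphism, but the underlying computation is identical.
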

  
\begin{proof}
As $xy^{-1}\in G=HK$, there exists $h\in H$ and $k\in K$ such that $xy^{-1}=hk$. Thus
\[
H^xK^y=x^{-1}Hxy^{-1}Ky=x^{-1}HhkKy=x^{-1}HKy=x^{-1}Gy=G,
\]
and
\[
H^x\cap K^y=(H^{xy^{-1}}\cap K)^y\cong H^{xy^{-1}}\cap K=H^{hk}\cap K=H^k\cap K=(H\cap K)^k\cong H\cap K.\qedhere
\]
\end{proof}

\section{Notation}

Throughout this paper, let $q=p^f$ be a power of a prime $p$, let $n\geqslant2$ be an integer such that $(n,q)\neq(2,2)$ or $(2,3)$, let $\,\overline{\phantom{\varphi}}\,$ be the homomorphism from $\GaL_n(q)$ to $\PGaL_n(q)$ modulo scalars, let $V$ be a vector space of dimension $n$ over $\bbF_q$, let
\[
v\in V\setminus\{0\},
\]
let $W$ be a hyperplane of $V$ not containing $v$, let $\phi$ be a field automorphism of $L$ of order $f$, and let $\gamma$ be the graph automorphism of $L$. 
Then $\phi$ and $\gamma$ commute, and so $|\phi\gamma|$ is the least common multiple of $f$ and $2$.
By abuse of notation, we also let $\phi$ and $\gamma$ denote the corresponding elements in $\Aut(\SL_n(q))$ and $\Out(L)$.
Recall that for each $g\in\SL(V)$, if we identify $V^*$ with $V$ in the canonical way, then $g^\gamma$ is the corresponding linear transformation of $(g^*)^{-1}$ on $V^*$, where $V^*$ is the dual space of $V$ and 
\[
g^*\colon V^*\to V^*,\quad\varphi\mapsto g\varphi
\]
is the \emph{pullback} of $g$.

If $n=2m$ is even, then the vector space $V$ can be regarded alternatively as a vector space $V_\sharp$ of dimension $m$ over $\bbF_{q^2}$. In this case, let $v_1,\dots,v_m$ be a basis of $V_\sharp$, let $\psi\in\SiL(V_\sharp)$ such that
\[
\psi\colon a_1v_1+\dots+a_mv_m\mapsto a_1^pv_1+\dots+a_m^pv_m
\]
for $a_1,\dots,a_m\in\bbF_{q^2}$, and let $\lambda$ be a generator of $\bbF_{q^2}^\times$. Then $v_1,\lambda v_1,\dots,v_m,\lambda v_m$ is a basis of $V$. 
Notice that a pullback of a linear transformation on $V_\sharp$ is a linear transformation on the dual space of $V_\sharp$. 
Thus $\gamma$ normalizes $\SL(V_\sharp)$.

\section{Infinite families of $(X,Y)$ in Table~\ref{TabLinear}}\label{SecLinear01}

In the first two lemmas we construct the factor pairs $(X,Y)$ in Row~1 of Table~\ref{TabLinear}.


\begin{lemma}\label{LemLinear01}
Let $G=\SL(V)=\SL_n(q)$, let $H=\SL_a(q^b)<G$ with $ab=n$, let $K=G_v$, let $Z=\overline{G}$, let $X=\overline{H}$, and let $Y=\overline{K}$. 
Then $H\cap K=q^{n-b}{:}\SL_{a-1}(q^b)$, and $Z=XY$ with $Z=\PSL_n(q)$, $X=\lefthat\SL_a(q^b)$ and $Y\cong K=q^{n-1}{:}\SL_{n-1}(q)$.
\end{lemma}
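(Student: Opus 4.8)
The plan is to exhibit the factorization $G = HK$ by a dimension count, then identify the three groups $Z$, $X$, $Y$ and the intersection $H \cap K$ via the geometry of the field-extension subgroup. Concretely, $H = \SL_a(q^b)$ sits inside $G = \SL_n(q)$ as the stabilizer of the $\bbF_{q^b}$-linear structure on $V$ (writing $V = V_\flat$ as an $a$-dimensional space over $\bbF_{q^b}$), and $K = G_v$ is the stabilizer of the nonzero vector $v$. First I would compute $H \cap K$: an element of $H$ fixing $v$ must fix the $1$-dimensional $\bbF_{q^b}$-subspace $\langle v\rangle_{\bbF_{q^b}}$ pointwise on the line $\bbF_{q^b} v$ only at $v$ itself, so $H\cap K$ is the stabilizer in $\SL_a(q^b)$ of a nonzero vector of $V_\flat$; this is $[q^{b(a-1)}]{:}\SL_{a-1}(q^b) = q^{n-b}{:}\SL_{a-1}(q^b)$, since $b(a-1) = n-b$. (The unipotent radical has order $q^{b(a-1)}$ because the transvection-type elements fixing $v$ form an $\bbF_{q^b}$-space of dimension $a-1$.)

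Next I would verify $G = HK$ by orders: $|G:K| = |G:G_v| = q^n - 1$ is the number of nonzero vectors (here $\SL_n(q)$ is transitive on $V\setminus\{0\}$ since $n \geq 2$), so it suffices to check $|H| = |H\cap K|\cdot(q^n-1)$, i.e. $|\SL_a(q^b)| = q^{n-b}|\SL_{a-1}(q^b)|(q^n-1)$. This reduces to the identity $|\SL_a(q^b)| = q^{b(a-1)}|\SL_{a-1}(q^b)|\cdot\frac{(q^{ba}-1)}{(q^b-1)}\cdot(q^b-1)\cdot\frac{q^n-1}{q^{ba}-1}$; but $ba = n$ so $q^{ba}-1 = q^n - 1$ and the $(q^b-1)$ factor from $|\SL_a(q^b):\SL_{a-1}(q^b)\cdot q^{b(a-1)}| = q^{ba}-1$ matches after noting the orbit of $H$ on nonzero vectors of $V$ has length $\frac{q^n-1}{q^b-1}\cdot(q^b-1) = q^n-1$ — more cleanly, $\SL_a(q^b)$ is already transitive on $V\setminus\{0\}$ when $a \geq 2$, so $H$ acts transitively on the $K$-cosets and $G = HK$ is immediate from the factorization criterion $|H||K| = |G||H\cap K|$. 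Alternatively one may invoke Lemma~\ref{LemXia01} with $N = L$, but here the direct count at the level of $\SL$ is cleanest.

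Then I would pass to the projective groups. By definition $Z = \overline{G} = \PSL_n(q)$, $X = \overline{H}$, $Y = \overline{K}$, and since $K = G_v \cap Z(G) = 1$ (no nontrivial scalar fixes $v \ne 0$, and $\SL_n(q) \cap Z(\GL_n(q))$ consists of scalars), the map $\overline{\phantom{x}}$ is injective on $K$, giving $Y \cong K = q^{n-1}{:}\SL_{n-1}(q)$. The notation $X = \lefthat\SL_a(q^b)$ records $\overline{H}$ as the image of the field-extension subgroup. Finally $Z = XY$ follows from $G = HK$ by applying the bar map (Lemma~\ref{LemXia01} direction: $\overline{G} = \overline{HK} = \overline{H}\cdot\overline{K}$). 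I would also note $Z = \langle L, X, Y\rangle$ holds since $Z = \overline{G} = L$ already.

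The main obstacle is the bookkeeping in the order computation — in particular keeping straight the $\gcd$'s and the $(q^b-1)$ factors when comparing $|\SL_a(q^b)|$ with $q^{n-b}|\SL_{a-1}(q^b)|(q^n-1)$, and confirming that $\SL_a(q^b)$ really is transitive on $V\setminus\{0\}$ (true for $a \geq 2$; the degenerate case $a = 1$, $b = n$ forces $X$ trivial and must be excluded or handled separately). Everything else is routine identification of stabilizers in the two natural geometries on $V$.
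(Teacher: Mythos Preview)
Your proposal is correct and follows essentially the same route as the paper: identify $H\cap K$ as the stabiliser in $\SL_a(q^b)$ of a nonzero vector of the $\bbF_{q^b}$-space $V_\flat$, giving $q^{n-b}{:}\SL_{a-1}(q^b)$, then verify $G=HK$ via the index equality $|G:K|=q^n-1=|H:H\cap K|$, and finally pass to the projective quotient. The paper's write-up is terser (it does not spell out the transitivity reformulation or worry about $a=1$, which is excluded by the ambient nonsolvability hypothesis), but the underlying computation is identical.
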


\begin{proof} 
It is clear that $Z=\PSL_n(q)$, $X=\lefthat\SL_a(q^b)$, and $Y\cong K=q^{n-1}{:}\SL_{n-1}(q)$. Since
\[
H\cap K=H\cap G_v=H_v=(q^b)^{a-1}{:}\SL_{a-1}(q^b)=q^{n-b}{:}\SL_{a-1}(q^b),
\]
we obtain
\[
\frac{|G|}{|K|}=\frac{|\SL_n(q)|}{|q^{n-1}{:}\SL_{n-1}(q)|}=q^n-1=\frac{|\SL_a(q^b)|}{|q^{n-b}{:}\SL_{a-1}(q^b)|}=\frac{|H|}{|H\cap K|},
\]
and so $G=HK$. This implies that $Z=\overline{G}=\overline{H}\,\overline{K}=XY$.
\end{proof}

\begin{lemma}\label{LemLinear02}
Let $G=\SL(V)=\SL_n(q)$ with $n$ even, let $H=\Sp_a(q^b)'<G$ with $ab=n$ and $a$ even, let $K=G_v$, let $Z=\overline{G}$, let $X=\overline{H}$, and let $Y=\overline{K}$. Then 
\[
H\cap K=
\begin{cases}
2^2{:}\Sp_2(2)&\textup{if }(a,b,q)=(4,1,2)\\
[q^{n-b}]{:}\Sp_{a-2}(q^b)&\textup{if }(a,b,q)\neq(4,1,2),
\end{cases}
\]
and $Z=XY$ with $Z=\PSL_n(q)$, $X=\lefthat\Sp_a(q^b)'$ and $Y\cong K=q^{n-1}{:}\SL_{n-1}(q)$.
\end{lemma}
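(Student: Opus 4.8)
The plan is to argue exactly as in the proof of Lemma~\ref{LemLinear01}, so that once the structure of $H\cap K$ is pinned down the whole statement reduces to an order count. First I would observe that $Z=\overline G=\PSL_n(q)$ and $X=\overline H=\lefthat\Sp_a(q^b)'$ are immediate from the definitions, and that $Y=\overline K\cong K=q^{n-1}{:}\SL_{n-1}(q)$: indeed $K=G_v$ is the (well known) stabiliser of the nonzero vector $v$, and the only scalar of $\SL_n(q)$ fixing $v$ is the identity, so $\overline K\cong K$.

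The main point is to determine $H\cap K=H\cap G_v=H_v$. Here $V$ is viewed as the natural $\bbF_{q^b}$-module $V_\sharp$ of dimension $a$ for $H=\Sp_a(q^b)'\leqslant\Sp(V_\sharp)$, and $v$ is a nonzero vector of $V_\sharp$. I would use the standard description of the stabiliser of a nonzero vector in a symplectic group: $\Sp_a(q^b)_v=[(q^b)^{a-1}]{:}\Sp_{a-2}(q^b)$, where the normal $(q^b)^{a-1}$ is a $p$-group, namely the kernel of the induced action on the symplectic space $v^\perp/\langle v\rangle$, and its order is forced by the transitivity of $\Sp_a(q^b)$ on nonzero vectors of $V_\sharp$ (every such vector is isotropic, so this follows from Witt's lemma). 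Since $b(a-1)=n-b$ and $(q^b)^a=q^n$, this gives $H_v=[q^{n-b}]{:}\Sp_{a-2}(q^b)$ whenever $\Sp_a(q^b)$ is perfect, that is, whenever $H=\Sp_a(q^b)$.

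The one case needing separate treatment is when $\Sp_a(q^b)$ is not perfect. As $\Sp_a(\ell)$ is perfect except for $(a,\ell)\in\{(2,2),(2,3),(4,2)\}$, and the first two of these force the excluded pairs $(n,q)=(2,2)$ and $(2,3)$, the only exception among admissible parameters is $(a,b,q)=(4,1,2)$. Here $G=\SL_4(2)$, the group $H=\Sp_4(2)'$ has index $2$ in $\Sp_4(2)$, and I would compute $H_v=H\cap\Sp_4(2)_v$ directly: $\Sp_4(2)_v=[2^3]{:}\Sp_2(2)$ has order $48$ and is not contained in $\Sp_4(2)'$ (it contains a transvection with centre $v$, an odd permutation under $\Sp_4(2)\cong\Sym_6$), so $H_v$ has index $2$ in it, of order $24$ and structure $2^2{:}\Sp_2(2)\cong\Sym_4$; this last identification I would check by hand or with \magma.

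With $H\cap K$ in hand, the factorisation follows by counting as in Lemma~\ref{LemLinear01}. One has $|G|/|K|=|\SL_n(q)|/(q^{n-1}|\SL_{n-1}(q)|)=q^n-1$, while $|\Sp_a(q^b)|/|\Sp_{a-2}(q^b)|=(q^b)^{a-1}((q^b)^a-1)=q^{n-b}(q^n-1)$ gives $|H|/|H\cap K|=q^n-1$ as well (in the exceptional case this reads $360/24=15=|G|/|K|$). Hence $|HK|=|H|\,|K|/|H\cap K|=|G|$, so $G=HK$, and passing to the quotient modulo scalars yields $Z=\overline G=\overline H\,\overline K=XY$. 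I expect the only genuine obstacle to be recording the vector-stabiliser structure precisely and handling the $(4,1,2)$ exception cleanly; everything else is bookkeeping strictly parallel to the preceding lemma.
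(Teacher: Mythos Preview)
Your proposal is correct and follows essentially the same approach as the paper: identify $H\cap K=H_v$ via the known vector stabiliser in $\Sp_a(q^b)$, then verify $|H|/|H\cap K|=q^n-1=|G|/|K|$ to conclude $G=HK$ and hence $Z=XY$. The only difference is that the paper dispatches the exceptional case $(a,b,q)=(4,1,2)$ entirely by a \textsc{Magma} computation, whereas you give a brief structural argument (the transvection being odd in $\Sp_4(2)\cong\Sy_6$) before appealing to direct computation; this is a cosmetic variation, not a different method.
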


\begin{proof}
It is clear that $Z=\PSL_n(q)$, $X=\lefthat\Sp_a(q^b)'$, and $Y\cong K=q^{n-1}{:}\SL_{n-1}(q)$. 
For $(a,b,q)=(4,1,2)$, we have $H=\Sp_4(2)'\cong\A_6$, and computation in \magma~\cite{BCP1997} shows that $H\cap K=2^2{:}\Sp_2(2)$ and $Z=XY$.
Thus assume that $(a,b,q)\neq(4,1,2)$. Consequently, $H=\Sp_a(q^b)'=\Sp_a(q^b)$. Since
\[
H\cap K=H\cap G_v=H_v=[(q^b)^{a-1}]{:}\Sp_{a-2}(q^b)=[q^{n-b}]{:}\Sp_{a-2}(q^b),
\]
we obtain
\[
\frac{|H|}{|H\cap K|}=\frac{|\Sp_a(q^b)|}{|q^{n-b}{:}\Sp_{a-2}(q^b)|}=q^n-1=\frac{|\SL_n(q)|}{|q^{n-1}{:}\SL_{n-1}(q)|}=\frac{|G|}{|K|},
\]
which implies $G=HK$. Hence $Z=\overline{G}=\overline{H}\,\overline{K}=XY$.
\end{proof}

Now we construct $(X,Y)$ in Row~2 of Table~\ref{TabLinear}, which will be displayed in Lemma~\ref{LemLinear04}. The next lemma will also be needed for symplectic groups.

\begin{lemma}\label{LemLinear03}
Let $G=\Sp_6(q)$ with $q$ even, let $H=\G_2(q)'<G$, and let $K=q^5{:}\Sp_4(q)$ be the subgroup of $G$ stabilizing a nonzero vector.
Then $G=HK$ with 
\[
H\cap K=[(q^5,q^6/4)]{:}\SL_2(q)=
\begin{cases}
2^{2+2}{:}\SL_2(2)&\textup{if }q=2\\
q^{2+3}{:}\SL_2(q)&\textup{if }q\geqslant4.
\end{cases}
\]
\end{lemma}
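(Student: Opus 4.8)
The plan is to establish the factorization $G=HK$ by a counting argument of the same shape as Lemmas~\ref{LemLinear01} and~\ref{LemLinear02}, namely to show that $|H|\cdot|K|=|G|\cdot|H\cap K|$ together with the obvious containment; and separately to identify the structure of the point stabilizer $H\cap K=H_v$. First I would recall the embedding $\G_2(q)<\Sp_6(q)$ for $q$ even: the natural $6$-dimensional module for $\G_2(q)$ (the nontrivial composition factor of the $7$-dimensional module in characteristic $2$) carries a nondegenerate $\G_2(q)$-invariant alternating form, giving $\G_2(q)\leqslant\Sp_6(q)$, and this subgroup is transitive on the $q^6-1$ nonzero vectors of $V$ (equivalently, transitive on the $q^5+q^4+\cdots+1$ points of $\PG(5,q)$ and, being irreducible, transitive on nonzero vectors since it contains the relevant scalars only trivially — one checks $|\G_2(q)|/(q^6-1)$ is an integer and matches $|(\G_2(q))_v|$). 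The vector stabilizer $K=G_v$ has order $|q^5{:}\Sp_4(q)|=q^5|\Sp_4(q)|$, so $|G|/|K|=|\Sp_6(q)|/(q^5|\Sp_4(q)|)=q^6-1$; the transitivity of $H$ on $V\setminus\{0\}$ gives $|H|/|H\cap K|=|H|/|H_v|=q^6-1$ as well, whence $|H|\cdot|K|=|G|\cdot|H\cap K|$ and $HK\subseteq G$ forces $G=HK$.

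For the structure of $H\cap K=H_v$, the stabilizer in $\G_2(q)$ of a nonzero vector of its $6$-dimensional module, I would use the known subgroup structure of $\G_2(q)$ in even characteristic (e.g.\ Cooperstein's work on the geometry of $\G_2(q)$, or Kleidman, or the maximal-subgroups literature). The orbit of $H$ on $V\setminus\{0\}$ being a single orbit of size $q^6-1$, the stabilizer has order $|\G_2(q)|/(q^6-1)=q^5(q^2-1)=q^5|\SL_2(q)|$. It remains to pin down the shape: I expect $H_v$ to be an extension $[q^5]{:}\SL_2(q)$ with the Levi part $\SL_2(q)$ acting, and the unipotent radical of order $q^5$ having the indicated structure $q^{2+3}$ (a $2$-dimensional and a $3$-dimensional piece as $\SL_2(q)$-modules) for $q\geqslant4$, with the small exception at $q=2$ where $\G_2(2)'$ is not quite $\G_2(2)$ and the extension is $2^{2+2}{:}\SL_2(2)$ of order $2^5\cdot 6 / 2$... — here one must be careful because $|\G_2(2)'|=\tfrac12|\G_2(2)|$, so the arithmetic $|H|/|H_v|=q^6-1=63$ still forces $|H_v|=|\G_2(2)'|/63=2^5\cdot 3$, i.e.\ $2^{2+2}{:}\SL_2(2)$, and this case I would simply verify by a \magma\ computation as was done for $(4,1,2)$ in Lemma~\ref{LemLinear02}.

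The main obstacle I anticipate is not the factorization itself (the counting is routine once transitivity of $\G_2(q)$ on nonzero vectors is cited) but rather the precise determination of the module structure of the unipotent radical of $H_v$, i.e.\ justifying the notation $[(q^5,q^6/4)]{:}\SL_2(q)$ and its resolution into $2^{2+2}$ versus $q^{2+3}$. For $q\geqslant4$ I would extract this from the description of the stabilizer of a nonsingular point in the $\G_2(q)$ hexagon geometry, or equivalently from the list of maximal parabolic and related subgroups of $\G_2(q)$ together with the action on the $6$-dimensional module; one identifies $H_v$ inside a suitable subgroup of $\G_2(q)$ and reads off that the unipotent radical has an $\SL_2(q)$-composition series with factors of dimensions $2$ and $3$. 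For $q=2$ the structural ambiguity is harmless since $\magma$ settles it directly, and I would flag that $\Sp_4(q)$ in $K$ should strictly be $\Sp_4(q)'$ when $q=2$ (as with $\Sp_6(2)$ versus its derived subgroup), though since $v$ is a genuine vector of the full $\Sp_6(q)$ and $H=\G_2(q)'$, the statement as written is about $\Sp_6(q)$ itself and needs no such adjustment.
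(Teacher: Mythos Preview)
Your proposal is correct and follows essentially the same approach as the paper: both handle $q=2$ by a \magma\ computation (noting $\G_2(2)'\cong\PSU_3(3)$), and for $q\geqslant4$ cite the literature for the structure $H_v=q^{2+3}{:}\SL_2(q)$ (the paper uses \cite[4.3.7]{Wilson2009} rather than Cooperstein or Kleidman), after which the order calculation $|G|/|K|=q^6-1=|H|/|H\cap K|$ gives $G=HK$. Your separate emphasis on transitivity of $\G_2(q)$ on $V\setminus\{0\}$ is just a rephrasing of this same order count, so there is no substantive difference.
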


\begin{proof}
For $q=2$, we have $H=\G_2(2)'\cong\PSU_3(3)$, and computation in \magma~\cite{BCP1997} shows that $G=HK$ with $H\cap K=2^{2+2}{:}\SL_2(2)$.
Thus assume that $q\geqslant4$.
Consequently, $H=\G_2(q)'=\G_2(q)$. 
Then from~\cite[4.3.7]{Wilson2009} we see that $H\cap K=q^{2+3}{:}\SL_2(q)$. Hence
\[
\frac{|G|}{|K|}=\frac{|\Sp_6(q)|}{|q^5{:}\Sp_4(q)|}=q^6-1=\frac{|\G_2(q)|}{|q^{2+3}{:}\SL_2(q)|}=\frac{|H|}{|H\cap K|},
\]
and so $G=HK$.
\end{proof}

\begin{lemma}\label{LemLinear04}
Let $G=\SL(V)=\SL_n(q)$ with $n=6b$ and $q$ even, let $H=\G_2(q^b)'<\Sp_6(q^b)<G$, let $K=G_v$, let $Z=\overline{G}$, let $X=\overline{H}$, and let $Y=\overline{K}$. 
Then 
\[
H\cap K=[(q^{5b},q^{6b}/4)]{:}\SL_2(q^b)=
\begin{cases}
2^{2+2}{:}\SL_2(2)&\textup{if }(n,q)=(6,2)\\
q^{2b+3b}{:}\SL_2(q^b)&\textup{if }(n,q)\neq(6,2),
\end{cases}
\]
and $Z=XY$ with $Z=\PSL_n(q)$, $X=\lefthat\G_2(q^b)'$ and $Y\cong K=q^{n-1}{:}\SL_{n-1}(q)$.
\end{lemma}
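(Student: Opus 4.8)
The plan is to mimic closely the proofs of Lemmas~\ref{LemLinear01} and~\ref{LemLinear02}, reducing the factorization $G=HK$ in $\SL_n(q)$ to the factorization of $\Sp_6(q^b)$ established in Lemma~\ref{LemLinear03}. First I would record the trivial identifications: since $H=\G_2(q^b)'$ lies in $\SL_n(q)=\SL_{6b}(q)$ acting as the $6$-dimensional module for $\Sp_6(q^b)$ viewed over $\bbF_q$, passing to the quotient modulo scalars gives $Z=\overline{G}=\PSL_n(q)$, $X=\overline H=\lefthat\G_2(q^b)'$, and $Y=\overline K\cong K=G_v=q^{n-1}{:}\SL_{n-1}(q)$, the stabilizer of a nonzero vector. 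The content is therefore entirely in computing $H\cap K$ and checking the order equality $|H|/|H\cap K|=|G|/|K|$.

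\smallskip
The key point is that $K=G_v$ is the stabilizer in $\SL_n(q)$ of the nonzero vector $v\in V$, and $v$ is simultaneously a nonzero vector of the $6$-dimensional $\bbF_{q^b}$-space carrying the symplectic form preserved by $\Sp_6(q^b)$. Hence $H\cap K=H\cap G_v=H_v$ is exactly the stabilizer in $\G_2(q^b)'$ of a nonzero vector of this $6$-dimensional module, which is precisely the intersection computed in Lemma~\ref{LemLinear03} (with $q$ there replaced by $q^b$). Thus
\[
H\cap K=[(q^{5b},q^{6b}/4)]{:}\SL_2(q^b),
\]
which equals $2^{2+2}{:}\SL_2(2)$ when $q^b=2$, i.e.\ $(n,q)=(6,2)$, and $q^{2b+3b}{:}\SL_2(q^b)$ otherwise. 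For the case $(n,q)=(6,2)$ one simply invokes the \magma\ computation already done in Lemma~\ref{LemLinear03} (or redoes it directly in $\SL_6(2)$); for the remaining cases $q^b\geqslant4$ one has $H=\G_2(q^b)$, and the intersection follows from Lemma~\ref{LemLinear03}.

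\smallskip
Given the intersection, the order count is routine:
\[
\frac{|H|}{|H\cap K|}=\frac{|\G_2(q^b)|}{|q^{2b+3b}{:}\SL_2(q^b)|}=q^{6b}-1=q^n-1=\frac{|\SL_n(q)|}{|q^{n-1}{:}\SL_{n-1}(q)|}=\frac{|G|}{|K|},
\]
where the middle equality is exactly the one verified in the proof of Lemma~\ref{LemLinear03}. By the factorization criterion (comparing orders, as in Lemma~\ref{LemLinear01}), this forces $G=HK$, and taking the quotient modulo scalars yields $Z=\overline G=\overline H\,\overline K=XY$. I expect no genuine obstacle here: the only mild subtlety is making sure that the vector $v$ fixed by $K$ can indeed be taken inside the $6$-dimensional $\bbF_{q^b}$-module in a way compatible with Lemma~\ref{LemLinear03}, but this is immediate once one fixes an $\bbF_q$-basis of $V$ adapted to the $\bbF_{q^b}$-structure, exactly as $v$ was chosen generically in Section~3; by Lemma~\ref{LemXia04} the conjugacy class of $v$ is irrelevant to the factorization.
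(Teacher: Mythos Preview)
Your proposal is correct and follows essentially the same route as the paper: identify $H\cap K=H_v$ with the vector stabilizer computed in Lemma~\ref{LemLinear03} (over $\bbF_{q^b}$), then do the order count $|H|/|H\cap K|=q^n-1=|G|/|K|$. The paper merely inserts the intermediate identification $\Sp_6(q^b)\cap K=q^{5b}{:}\Sp_4(q^b)$ via Lemma~\ref{LemLinear02} before invoking Lemma~\ref{LemLinear03}, whereas you pass to $H_v$ directly; the arguments are otherwise identical.
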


\begin{proof}
It is clear that $Z=\PSL_n(q)$, $X=\lefthat\G_2(q^b)'$, and $Y\cong K=q^{n-1}{:}\SL_{n-1}(q)$. 
By Lemmas~\ref{LemLinear02} and~\ref{LemLinear03} we have
\begin{align*}
H\cap K=H\cap(\Sp_6(q^b)\cap K)&=H\cap(q^{5b}{:}\Sp_4(q^b))\\
&=[(q^{5b},q^{6b}/4)]{:}\SL_2(q^b)=
\begin{cases}
2^{2+2}{:}\SL_2(2)&\textup{if }(n,q)=(6,2)\\
q^{2b+3b}{:}\SL_2(q^b)&\textup{if }(n,q)\neq(6,2).
\end{cases}
\end{align*}
Hence
\[
\frac{|G|}{|K|}=\frac{|\SL_n(q)|}{|q^{n-1}{:}\SL_{n-1}(q)|}=q^n-1=q^{6b}-1=\frac{|\G_2(q^b)'|}{|[(q^{5b},q^{6b}/4)]{:}\SL_2(q^b)|}=\frac{|H|}{|H\cap K|},
\]
and so $G=HK$, which implies $Z=\overline{G}=\overline{H}\,\overline{K}=XY$.
\end{proof}

The factor pair $(X,Y)$ in Row~3 of Table~\ref{TabLinear} is constructed in the following lemma.

\begin{lemma}\label{LemLinear05}
Let $G=\SL(V)=\SL_n(q)$ with $n$ even, let $H=\Sp_n(q)'<G$, let $K=G_{v,W}$, let $Z=\overline{G}$, let $X=\overline{H}$, and let $Y=\overline{K}$. 
Then $H\cap K=\Sp_{n-2}(q)$, and $Z=XY$ with $Z=\PSL_n(q)$, $X=\lefthat\Sp_n(q)'$ and $Y\cong K=\SL_{n-1}(q)$.
\end{lemma}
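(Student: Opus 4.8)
The plan is to follow the same template used in Lemmas~\ref{LemLinear01}--\ref{LemLinear04}: exhibit the two subgroups explicitly, compute the intersection $H\cap K$, and then verify the factorization by the order equality $|G|/|K|=|H|/|H\cap K|$, after which passing to the quotient modulo scalars gives $Z=XY$. First I would observe that the claims $Z=\PSL_n(q)$, $X=\lefthat\Sp_n(q)'$ and $Y\cong K=\SL_{n-1}(q)$ are immediate: $H=\Sp_n(q)'$ is the full symplectic group except when $(n,q)=(4,2)$ (where $\Sp_4(2)'\cong\A_6$), and $K=G_{v,W}$ is the stabilizer in $\SL(V)$ of both the nonzero vector $v$ and the complementary hyperplane $W$, which is the Levi factor $\SL(W)\cong\SL_{n-1}(q)$ acting on $W$ (fixing $v$), and this maps isomorphically into $\overline G$ since it meets the centre trivially.

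The core computation is $H\cap K=\Sp_{n-2}(q)$. Here the symplectic form on $V$ preserved by $H$ must be chosen compatibly: since $v\notin W$, the decomposition $V=\langle v\rangle\oplus W$ is available, and one wants the form for which $v$ pairs nontrivially with a vector of $W$ so that there is a hyperbolic pair $v,w$ with $w\in W$ and $V=\langle v,w\rangle\perp U$ with $U\leqslant W$ of dimension $n-2$; by Lemma~\ref{LemXia04} we are free to replace $H$ by a conjugate, so this is no loss. Then $H\cap K=H_{v,W}$ consists of the symplectic transformations fixing $v$ and stabilizing $W$; fixing $v$ forces the stabilization of $v^\perp$, and since $W$ is a hyperplane with $v\notin W$ one checks $v^\perp\cap W=U$, so such a transformation fixes $v$, stabilizes $U$ and hence also $\langle v,w\rangle\cap W$-data; a short argument (a symplectic transvection-type analysis, or simply restricting to $U$) shows the induced action on $U$ can be any element of $\Sp(U)=\Sp_{n-2}(q)$ and that the kernel of this restriction is trivial. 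For $(n,q)=(4,2)$ the same identification holds with $\Sp_2(2)\cong\Sy_3$, and I would note no exceptional behaviour arises (or verify in \magma\ if needed, as in the companion lemmas).

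With $H\cap K=\Sp_{n-2}(q)$ in hand, the factorization follows from
\[
\frac{|G|}{|K|}=\frac{|\SL_n(q)|}{|\SL_{n-1}(q)|}=\frac{q^n-q}{q-1}\cdot\frac{1}{q^{?}}
\]
— more cleanly, $|\SL_n(q)|/|\SL_{n-1}(q)|=q^{n-1}(q^n-1)/(q-1)$ equals the number of nonzero-vector/complementary-hyperplane flags, while $|\Sp_n(q)|/|\Sp_{n-2}(q)|=q^{n-1}(q^n-1)$; since $K$ has index $q-1$ smaller... so in fact one compares $|H|/|H\cap K|$ with $|G|/|K|$ directly and they are equal, giving $|G|=|H||K|/|H\cap K|$, hence $G=HK$ by the standard order criterion (the displayed chain of equalities in Lemma~\ref{LemLinear01} is the model). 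Taking the bar map then yields $Z=\overline G=\overline H\,\overline K=XY$. The main obstacle I anticipate is purely bookkeeping: getting the symplectic form aligned with the pair $(v,W)$ so that the intersection computation is clean, and handling the small-field degeneracy $\Sp_4(2)'\cong\A_6$; neither is deep, but the form-alignment step is where one must be careful, and invoking Lemma~\ref{LemXia04} to choose a convenient conjugate of $H$ is the key device that makes it routine.
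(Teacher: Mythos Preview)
Your plan is exactly the paper's: choose a symplectic basis so that $v=e_1$ and $W=\langle f_1,e_2,f_2,\dots,e_{n/2},f_{n/2}\rangle$, read off $H_{v,W}=\Sp_{n-2}(q)$, verify the index equality, and pass to the quotient. Two corrections are needed, however.

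First, your order computation is muddled: in fact
\[
\frac{|\SL_n(q)|}{|\SL_{n-1}(q)|}=q^{n-1}(q^n-1)
\]
with no stray factor of $q-1$ (you are counting pairs $(v,W)$ with $v$ a nonzero \emph{vector}, not a projective point), and this matches $|\Sp_n(q)|/|\Sp_{n-2}(q)|$ on the nose. There is nothing to explain away, and the sentence ``since $K$ has index $q-1$ smaller'' should simply be deleted.

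Second, the case $(n,q)=(4,2)$ genuinely is exceptional and cannot be absorbed into the generic argument: there $H=\Sp_4(2)'\cong\A_6$ has index $2$ in $\Sp_4(2)$, so the order identity $|G|/|K|=|H|/|H\cap K|$ with the generic intersection fails by a factor of $2$. The paper handles this case by a direct \textsc{Magma} check, and you should plan on doing the same rather than asserting that ``no exceptional behaviour arises''.
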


\begin{proof}
It is clear that $Z=\PSL_n(q)$, $X=\lefthat\Sp_n(q)'$, and $Y\cong K=\SL_{n-1}(q)$. 
For $(n,q)=(4,2)$, we have $H=\Sp_4(2)'\cong\A_6$, and computation in \magma~\cite{BCP1997} shows that $Z=XY$.
Thus assume that $(n,q)\neq(4,2)$.
Consequently, $H=\Sp_n(q)'=\Sp_n(q)$. 
Let $\beta$ be a nondegenerate alternating form on $V$ with standard basis $e_1,f_1,\dots,e_{n/2},f_{n/2}$.
Without loss of generality, assume that $H=\Sp(V,\beta)$, $v=e_1$, and $W=\langle f_1,e_2,f_2,\dots,e_{n/2},f_{n/2}\rangle$. Then
\[
H\cap K=H\cap G_{v,W}=H\cap G_{e_1,W}=H_{e_1,W}=\Sp_{n-2}(q),
\]
and so
\[
\frac{|G|}{|K|}=\frac{|\SL_n(q)|}{|\SL_{n-1}(q)|}=q^{n-1}(q^n-1)=\frac{|\Sp_n(q)|}{|\Sp_{n-2}(q)|}=\frac{|H|}{|H\cap K|}.
\]
It follows that $G=HK$ and hence $Z=\overline{G}=\overline{H}\,\overline{K}=XY$.
\end{proof}

The factor pairs $(X,Y)$ in Rows~4--7 of Table~\ref{TabLinear} are constructed in the following four lemmas.

\begin{lemma}\label{LemLinear06}
Let $G=\SiL(V)=\SiL_n(q)$ with $q\in\{2,4\}$ and $n=2m$, let $H=\SiL(V_\sharp)<G$, let $K=G_{v,W}$, let $Z=\overline{G}$, let $X=\overline{H}$, and let $Y=\overline{K}$. Then 
\[
H\cap K=H^{(\infty)}\cap K^{(\infty)}=\SL_{m-1}(q^2), 
\]
and $Z=XY$ with $Z=\PSiL_n(q)$, $X=\SiL_m(q^2)/(m,q-1)$ and $Y\cong K=\SiL_{n-1}(q)$.
\end{lemma}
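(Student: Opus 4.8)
The plan is to follow the pattern of Lemmas~\ref{LemLinear01}--\ref{LemLinear05}: first pin down $H\cap K$ exactly, then verify the numerical identity $|G|/|K|=|H|/|H\cap K|$, which forces $G=HK$ and hence $Z=\overline{G}=\overline{H}\,\overline{K}=XY$.

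First I would dispose of the routine identifications. Since $q$ is even, the $q$-power Frobenius $\psi^f$ of $\bbF_{q^2}$ acts on $V$ over $\bbF_q$ by fixing each $v_i$ and sending $\lambda v_i$ to $\Tr(\lambda)v_i-\lambda v_i$, hence has determinant $(-1)^m=1$; therefore $\SL(V)\langle\psi\rangle\cong\SiL_n(q)$, and (replacing $G$ by this conjugate of $\SiL(V)$ if necessary) we may assume $H=\SL(V_\sharp)\langle\psi\rangle\leqslant G$, so that $Z=\overline{G}=\PSiL_n(q)$. The $\bbF_q$-scalars lying in $\SL(V_\sharp)$ form a cyclic group of order $(m,q-1)$ — the scalar $\mu$ acts on $V_\sharp$ with determinant $\mu^m$ — and $\SiL(V_\sharp)$ contains no further scalar, so $X=\overline{H}\cong\SiL_m(q^2)/(m,q-1)$; no nontrivial scalar fixes $v$, so $Y=\overline{K}\cong K$, and $K\cong\SiL_{n-1}(q)$ because $K\cap\SL(V)=\SL(W)\cong\SL_{n-1}(q)$ and the field automorphism attached to a basis of the form $v,w_2,\dots,w_n$ with $w_2,\dots,w_n\in W$ lies in $K$. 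Finally $H^{(\infty)}=\SL(V_\sharp)$ and $K^{(\infty)}=\SL(W)$, as the relevant quotients are cyclic while $\SL_m(q^2)$ and $\SL_{n-1}(q)$ are perfect here.

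The substantive step is the computation of $H\cap K$. Let $W_0$ be the largest $\bbF_{q^2}$-subspace of $V_\sharp$ contained in $W$. Since $\dim_{\bbF_q}W=n-1$ is odd, $W$ is not $\bbF_{q^2}$-invariant, so $W_0=W\cap\lambda^{-1}W$ is the intersection of two distinct hyperplanes and, being $\bbF_{q^2}$-stable, has $\bbF_q$-codimension exactly $2$; that is, $\dim_{\bbF_{q^2}}W_0=m-1$. As $v\notin W$, the $\bbF_{q^2}$-line $\langle v\rangle_{\bbF_{q^2}}$ meets $W_0$ trivially, so $V_\sharp=\langle v\rangle_{\bbF_{q^2}}\oplus W_0$; writing $W\cap\langle v\rangle_{\bbF_{q^2}}=\bbF_q\mu v$ we obtain $\mu\in\bbF_{q^2}\setminus\bbF_q$ and $W=\bbF_q\mu v\oplus W_0$. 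Now take $t\in H\cap K$. If $t$ is $\bbF_{q^2}$-linear, then $t(v)=v$ makes $t$ fix $\langle v\rangle_{\bbF_{q^2}}$ pointwise, $t(W)=W$ gives $t(W_0)=W_0$, and $\det_{\bbF_{q^2}}(t)=1$ gives $t|_{W_0}\in\SL(W_0)$; conversely each element of $\SL(W_0)$, extended by the identity on $\langle v\rangle_{\bbF_{q^2}}$, lies in $\SL(V_\sharp)\cap K$. If $t$ is $\bbF_{q^2}$-semilinear with nontrivial companion automorphism $\sigma$, then $t(v)=v$ together with $\mu v\in W$ forces $\mu^\sigma v=t(\mu v)\in W\cap\langle v\rangle_{\bbF_{q^2}}=\bbF_q\mu v$, hence $\mu^\sigma/\mu\in\bbF_q^\times$; but a short check in $\bbF_{q^2}$ with $q\in\{2,4\}$ shows that no element of $\bbF_{q^2}\setminus\bbF_q$ has this property for any nontrivial $\sigma$, a contradiction. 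Therefore $H\cap K=\SL(W_0)\cong\SL_{m-1}(q^2)$, and the same analysis gives $\SL(V_\sharp)\cap\SL(W)=\SL(W_0)$, so $H\cap K=H^{(\infty)}\cap K^{(\infty)}=\SL_{m-1}(q^2)$.

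It then remains to check $G=HK$. With the identifications above, $|G|/|K|=|\SL_n(q)|/|\SL_{n-1}(q)|=q^{n-1}(q^n-1)$, while $|H|/|H\cap K|=|\SiL_m(q^2)|/|\SL_{m-1}(q^2)|=2fq^{n-2}(q^n-1)$; these agree because $q=2f$ when $q\in\{2,4\}$, so $|HK|=|H|\,|K|/|H\cap K|=|G|$ and $G=HK$, whence $Z=\overline{G}=\overline{H}\,\overline{K}=XY$. The main obstacle, I expect, is the determination of $H\cap K$, and within it the exclusion of semilinear elements from the intersection: one must set up the $\bbF_{q^2}$-geometry correctly (the subspace $W_0$ and the splitting $V_\sharp=\langle v\rangle_{\bbF_{q^2}}\oplus W_0$) and then run the short but essential field computation that uses $q\leqslant4$. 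Keeping straight the distinction between $\SiL$ and $\SL$ and which scalars survive is fiddly but routine.
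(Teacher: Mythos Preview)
Your proof is correct and follows essentially the same strategy as the paper: compute $H\cap K$ exactly as $\SL_{m-1}(q^2)$ by first identifying the $\bbF_{q^2}$-linear part of the intersection and then excluding semilinear elements, and finish with the order count using $q=2f$.

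The only noteworthy difference is in execution. The paper fixes the explicit choice $v=v_1$ and $W=\langle\lambda v_1,v_2,\lambda v_2,\dots,v_m,\lambda v_m\rangle$, so that your $\mu$ is the specific generator $\lambda$, and then reduces the semilinear case to the single coset $\psi^fS$ (for $q=4$ by squaring an element of $\psi S$ or $\psi^3S$ into $\psi^2S$), checking that $(\lambda v_1)^{\psi^f s}=\lambda^qv_1\notin W$ because $\lambda^q=a\lambda+b$ with $b\neq0$. Your version works with an arbitrary antiflag $(v,W)$, introduces the maximal $\bbF_{q^2}$-subspace $W_0\leqslant W$ and the scalar $\mu$ with $W\cap\langle v\rangle_{\bbF_{q^2}}=\bbF_q\mu v$, and excludes every nontrivial companion $\sigma$ at once via the field check that $\mu^\sigma/\mu\notin\bbF_q^\times$ for all $\mu\in\bbF_{q^2}\setminus\bbF_q$ when $q\in\{2,4\}$. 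Your route is slightly more coordinate-free and avoids the squaring trick; the paper's is a touch more concrete. Either way the content is the same.
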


\begin{proof}
Without loss of generality, assume that $v=v_1$ and $W$ is the subspace of $V$ spanned by $\lambda v_1,v_2,\lambda v_2,\dots,v_m,\lambda v_m$.
It is clear that $Z=\PSiL_n(q)$, $X=\SiL_m(q^2)/(m,q-1)$, and $Y\cong K=\SiL_{n-1}(q)$.
Let $S=H^{(\infty)}$. Then $S=\SL(V_\sharp)=\SL_m(q^2)$, and $H=S{:}\langle\psi\rangle$.

We first calculate $S\cap K$. 
Let $U$ be the subspace of $V_\sharp$ spanned by $v_2,\dots,v_m$, and let $g\in S\cap K$.
Since $g\in S=\SL(V_\sharp)$, it follows that $U^g$ is a hyperplane of $V_\sharp$.
Since $g\in K=G_{v,W}\leqslant G_W$, we have $U^g\subseteq W$.
Then as $U$ is the only hyperplane of $V_\sharp$ that is contained in $W$, we conclude that $U^g=U$.
This together with $g\in K=G_{v,W}\leqslant G_v$ implies that $g\in S_{v,U}$.
Conversely, each element of $S_{v,U}$ lies in $G_{v,W}$ as it stabilizes $\langle\lambda v,U\rangle_{\bbF_q}=W$.
Hence $S\cap K=S_{v,U}=\SL_{m-1}(q^2)$. As $S\cap K^{(\infty)}$ is a normal subgroup of $S\cap K$ of index at most $2$, this implies that
$S\cap K^{(\infty)}=H^{(\infty)}\cap K^{(\infty)}=\SL_{m-1}(q^2)=S\cap K$.

Now as $q\in\{2,4\}$ we have $q=2f$, and so
\[
\frac{|G|}{|K|}=\frac{|\SiL_n(q)|}{|\SiL_{n-1}(q)|}=q^{n-1}(q^n-1)=2fq^{2m-2}(q^{2m}-1)=\frac{|\SiL_m(q^2)|}{|\SL_{m-1}(q^2)|}=\frac{|H|}{|S\cap K|}.
\]
Thus it suffices to prove $H\cap K=S\cap K$, or equivalently, $(H\setminus S)\cap K=\emptyset$. 
Suppose for a contradiction that there exists $k\in(H\setminus S)\cap K$. 
Since $\psi$ has order $2f$, we have 
\[
H=S{:}\langle\psi\rangle=S\cup\psi S\cup\dots\cup\psi^{2f-1}S.
\]
If $k\in\psi^fS$, then $k=\psi^fs$ for some $s\in S$.
If $k\notin\psi^fS$, then $q=4$ and $k^2\in\psi^fS$, which means that $k^2=\psi^fs$ for some $s\in S$.
In either case, there exists $s\in S$ such that $\psi^fs\in K$.
Since $K\leqslant G_v=G_{v_1}$ and $s\in S=\SL(V_\sharp)$, it follows that $v_1^s=v_1^{\psi^fs}=v_1$ and
\[
(\lambda v_1)^{\psi^fs}=(\lambda^{2^f}v_1)^s=\lambda^{2^f}v_1^s=\lambda^{2^f}v_1=\lambda^qv_1.
\]
Write $\lambda^q=a\lambda+b$ with $a,b\in\bbF_q$. Then $a$ and $b$ are both nonzero as $\lambda$ is a generator of $\bbF_{q^2}$.
Hence $(\lambda v_1)^{\psi^fs}=\lambda^qv_1=a\lambda v_1+bv_1\notin W$, contradicting the condition that $\psi^fs\in K\leqslant G_W$.
\end{proof}

\begin{lemma}\label{LemLinear16}
Let $G=\SL(V){:}\langle\phi\gamma\rangle$ with $q\in\{2,4\}$ and $n=2m$, let $H=\SL(V_\sharp){:}\langle\psi\gamma\rangle<G$, let $K=\SL_{n-1}(q){:}\langle\phi\gamma\rangle<G$, let $Z=\overline{G}$, let $X=\overline{H}$, and let $Y=\overline{K}$. 
Then $H\cap K=\SL_{m-1}(q^2)$, and $Z=XY$ with $Z=\PSL_n(q).2$, $X=(\SL_m(q^2).(2f))/(m,q-1)$ and $Y\cong K=\SL_{n-1}(q).2$.
\end{lemma}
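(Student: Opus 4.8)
The plan is to deduce this factorization from the already-established Lemma~\ref{LemLinear06} by extending the group $\SiL(V)$ (containing $\SL(V)\langle\phi\rangle$) by the graph automorphism $\gamma$, exploiting the identity $|\phi\gamma|=\mathrm{lcm}(f,2)$ noted in the Notation section. First I would pin down the structure of $Z$, $X$ and $Y$: since $q\in\{2,4\}$ we have $\phi=\phi^f$ of order $f\in\{1,2\}$, and $\phi\gamma$ has order $2$ when $f=1$ (so $Z=\SL_n(q).2=\PSL_n(q).2$), while when $q=4$ the element $\phi\gamma$ has order $2$ as well (since $\phi$ and $\gamma$ commute and $\phi^2=1$, $(\phi\gamma)^2=\phi^2\gamma^2=1$), so again $Z=\overline{G}=\PSL_n(q).2$; the identification $X=(\SL_m(q^2).(2f))/(m,q-1)$ comes from $H=\SL(V_\sharp){:}\langle\psi\gamma\rangle$ with $\psi$ of order $2f$ over $\bbF_{q^2}$ and $\gamma$ normalizing $\SL(V_\sharp)$ (as recorded in the Notation section), and $Y\cong K=\SL_{n-1}(q).2$ is immediate.

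The heart of the argument is the order computation together with the intersection count. For the intersection, I would argue that $H\cap K\supseteq H^{(\infty)}\cap K^{(\infty)}=\SL(V_\sharp)_{v,U}=\SL_{m-1}(q^2)$ exactly as in Lemma~\ref{LemLinear06} (the subspace set-up for $v$ and $W$ carries over verbatim), and then show the reverse inclusion $H\cap K\subseteq\SL_{m-1}(q^2)$. The key point is that an element of $H\cap K$ either already lies in $\SL(V_\sharp)$, in which case Lemma~\ref{LemLinear06}'s analysis applies, or it lies in a coset $\psi^i\gamma^j\SL(V_\sharp)$ with $j=1$ or $i\not\equiv0$; but such an element, composed appropriately, would force a nontrivial field-automorphism action on the vector $\lambda v_1$ incompatible with fixing $v$ and $W$ — precisely the contradiction reached in the last paragraph of the proof of Lemma~\ref{LemLinear06}. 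The graph automorphism component is handled separately: an element of $K$ inducing $\gamma$ on $L$ cannot fix a $1$-space and a complementary hyperplane while also inducing $\gamma$, since $\gamma$ interchanges the relevant point and hyperplane orbits — but here $K=\SL_{n-1}(q){:}\langle\phi\gamma\rangle$ does contain elements inducing $\gamma$ on $L$ that stabilise the flag $(v,W)$ by construction, so the correct statement is that $H\cap K$ picks up no extra elements beyond $\SL_{m-1}(q^2)$ because the "diagonal" coset structures of $H$ and $K$ relative to $\langle\phi\gamma\rangle$ and $\langle\psi\gamma\rangle$ only overlap in $\SL(V_\sharp)$-cosets that were already excluded.

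Once $H\cap K=\SL_{m-1}(q^2)$ is established, the factorization follows from the index arithmetic: using $|Z|=2f\cdot|\PSL_n(q)|$, $|Y|=2\cdot|\SL_{n-1}(q)|/(n,q-1)$ and $|X|=2f\cdot|\SL_m(q^2)|/(m,q-1)$, one checks
\[
\frac{|Z|}{|Y|}=\frac{2f\,|\PSL_n(q)|}{2\,|\SL_{n-1}(q)|/(n,q-1)}=f\,q^{n-1}(q^n-1)=\frac{2f\,|\SL_m(q^2)|/(m,q-1)}{|\SL_{m-1}(q^2)|}=\frac{|X|}{|X\cap Y|},
\]
where the middle equality uses $n=2m$ and the fact that $(n,q-1)=1$ for $q\in\{2,4\}$ with $n$ even; this gives $Z=XY$, and then $G=HK$ follows since $\overline{H}=X$, $\overline{K}=Y$ surject onto the $Z/L$-factorization and Lemma~\ref{LemXia01} (or directly, since $H\cap K$ has been computed in $G$) closes the gap. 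The main obstacle I anticipate is the careful bookkeeping in the reverse intersection inclusion: one must track how $\psi$, $\phi$ and $\gamma$ interact on the two $\bbF_q$-basis vectors $v_1,\lambda v_1$ spanning an $\bbF_{q^2}$-line, and verify that no coset representative outside $\SL(V_\sharp)$ can simultaneously fix $v=v_1$ and preserve $W$; the graph automorphism, acting via duality, adds a layer not present in Lemma~\ref{LemLinear06}, so I would need to confirm that $\gamma$'s action on the dual space does not create spurious elements of $H\cap K$ — most likely by noting that any such element, after multiplying by a suitable element of $\SL_{m-1}(q^2)$, reduces to the field-automorphism obstruction already handled.
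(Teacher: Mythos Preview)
Your overall strategy---reduce the intersection computation to Lemma~\ref{LemLinear06} and then count---matches the paper's, but the execution has two concrete problems.

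First, the intersection argument is not yet a proof. You correctly place elements of $H\cap K$ in cosets $(\psi\gamma)^i\SL(V_\sharp)$ and $(\phi\gamma)^j\SL(V)_{v,W}$, but your account of why no nontrivial coset survives is hand-waving (your first attempt, that a $\gamma$-inducing element cannot stabilise the antiflag, you yourself immediately retract). The paper's device is short and decisive: write $g=s\psi^i\gamma^i=t\phi^j\gamma^j$ with $s\in\SL(V_\sharp)$ and $t\in\SL(V)_{v,W}$, observe that $s\psi^i$ and $t\phi^j$ both lie in $\GaL(V)$, and pass to the quotient modulo $\GaL(V)$ to force $\gamma^i=\gamma^j$. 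Then $s\psi^i=t\phi^j\in\SiL(V_\sharp)\cap\SiL(V)_{v,W}$, and Lemma~\ref{LemLinear06} says this intersection already equals $\SL(V_\sharp)\cap\SL(V)_{v,W}=\SL_{m-1}(q^2)$, whence $i=j=0$. No separate analysis of the duality action of $\gamma$ on $v_1,\lambda v_1$ is needed.

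Second, your index computation is wrong and internally inconsistent. You correctly argue that $|\phi\gamma|=2$ for both $q=2$ and $q=4$, yet then write $|Z|=2f\,|\PSL_n(q)|$ (which would be $4\,|\PSL_n(q)|$ when $q=4$); you claim $(n,q-1)=1$ for $q\in\{2,4\}$ with $n$ even, which fails for $q=4$ whenever $3\mid m$; and you divide $|Y|$ by $(n,q-1)$, but $K$ meets the centre of $\SL(V)$ trivially (its elements fix $v$), so $Y\cong K$ with no quotient. These errors do not cancel: your displayed equality fails for $q=4$. The clean fix is to compute with $G$, $H$, $K$ rather than $Z$, $X$, $Y$, and to use the identity $q=2f$: then
\[
\frac{|G|}{|K|}=q^{n-1}(q^n-1)=2f\,q^{2m-2}(q^{2m}-1)=\frac{|\SL_m(q^2){:}(2f)|}{|\SL_{m-1}(q^2)|}=\frac{|H|}{|H\cap K|}.
\]
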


\begin{proof}
Without loss of generality, assume $K^{(\infty)}=\SL(V)_{v,W}$ such that $v=v_1$ and $W$ is the image of $\langle v\rangle_{\bbF_q}$ under the action of $\gamma$. 
It is clear that $Z=\PSL_n(q).2$, $X=(\SL_m(q^2).(2f))/(m,q-1)$ and $Y\cong K=\SL_{n-1}(q).2$.
Let $g\in H\cap K$. Then 
\begin{equation}\label{EqnLinear02}
g=s\psi^i\gamma^i=t\phi^j\gamma^j
\end{equation}
for some $s\in\SL(V_\sharp)$, $t\in\SL(V)_{v,W}$, $i\in\{0,1,\dots,2f-1\}$ and $j\in\{0,1\}$.
From~\eqref{EqnLinear02} we deduce that $\GaL(V)\gamma^i=\GaL(V)g=\GaL(V)\gamma^j$ and hence $\gamma^i=\gamma^j$, which leads to $s\psi^i=t\phi^j$.
Since $s\psi^i=t\phi^j$ lies in $\SiL(V_\sharp)\cap\SiL(V)_{v,W}$ and Lemma~\ref{LemLinear06} shows that 
\[
\SiL(V_\sharp)\cap\SiL(V)_{v,W}=\SL(V_\sharp)\cap\SL(V)_{v,W}=\SL_{m-1}(q^2),
\]
it follows that $i=j=0$, and then~\eqref{EqnLinear02} gives $g=s=t\in\SL(V_\sharp)\cap\SL(V)_{v,W}=H^{(\infty)}\cap K^{(\infty)}$.
Since $g$ is arbitrary in $H\cap K$, it follows that 
\[
H\cap K=H^{(\infty)}\cap K^{(\infty)}=\SL_{m-1}(q^2).
\]
As $q\in\{2,4\}$ we have $q=2f$, and so
\[
\frac{|G|}{|K|}=\frac{|\SL_n(q){:}2|}{|\SL_{n-1}(q){:}2|}=q^{n-1}(q^n-1)=2fq^{2m-2}(q^{2m}-1)=\frac{|\SL_m(q^2){:}(2f)|}{|\SL_{m-1}(q^2)|}=\frac{|H|}{|H\cap K|}.
\]
Hence $G=HK$, which implies that $Z=\overline{G}=\overline{H}\,\overline{K}=XY$.
\end{proof}

\begin{lemma}\label{LemLinear07}
Let $G=\SiL(V)=\SiL_n(q)$ with $q\in\{2,4\}$ and $n=2m$ for some even $m$, let $H=\GaSp_m(q^2)<\SiL_m(q^2)<G$, let $K=G_{v,W}$, let $Z=\overline{G}$, let $X=\overline{H}$, and let $Y=\overline{K}$. Then $H\cap K=\Sp_{m-2}(q^2)$, and $Z=XY$ with $Z=\PSiL_n(q)$, $X=\GaSp_m(q^2)$ and $Y\cong K=\SiL_{n-1}(q)$.
\end{lemma}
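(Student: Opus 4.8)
The plan is to mimic closely the structure of the proof of Lemma~\ref{LemLinear06}, specializing to the symplectic subgroup and keeping track of the extra field automorphism. First I would fix convenient coordinates: choose $v=v_1$ and take $W$ to be the $\bbF_q$-subspace of $V$ spanned by $\lambda v_1,v_2,\lambda v_2,\dots,v_m,\lambda v_m$, exactly as in Lemma~\ref{LemLinear06}, so that $W$ is a hyperplane of $V$ not containing $v$ and contains the unique hyperplane $U=\langle v_2,\dots,v_m\rangle$ of $V_\sharp$. The identifications $Z=\PSiL_n(q)$, $X=\GaSp_m(q^2)$ and $Y\cong K=\SiL_{n-1}(q)$ are immediate from the definitions, so the content is the intersection computation and the order count.

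Next I would compute $S\cap K$ where $S=H^{(\infty)}=\Sp(V_\sharp)=\Sp_m(q^2)$ (using $q\in\{2,4\}$ so that $m\geqslant2$ makes $\Sp_m(q^2)$ perfect, after checking the small-case exception $\Sp_2(q^2)$ or rather noting $m$ is even so the smallest case is $\Sp_4(4)$ which is fine). Arguing as in Lemma~\ref{LemLinear06}: for $g\in S\cap K$, the image $U^g$ is a hyperplane of $V_\sharp$ contained in $W$, hence $U^g=U$, and together with $v^g=v$ this forces $g\in S_{v,U}$; conversely $S_{v,U}$ fixes $\langle\lambda v,U\rangle_{\bbF_q}=W$ pointwise enough to lie in $G_{v,W}$. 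The stabilizer in $\Sp_m(q^2)$ of the nonzero vector $v_1$ and a complementary hyperplane $U$ is $\Sp_{m-2}(q^2)$, so $S\cap K=\Sp_{m-2}(q^2)$. Then the order arithmetic, using $q=2f$,
\[
\frac{|G|}{|K|}=\frac{|\SiL_n(q)|}{|\SiL_{n-1}(q)|}=q^{n-1}(q^n-1)=2fq^{2m-2}(q^{2m}-1)=\frac{|\GaSp_m(q^2)|}{|\Sp_{m-2}(q^2)|}=\frac{|H|}{|S\cap K|},
\]
shows $G=HK$ will follow once $H\cap K=S\cap K$.

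Finally I would prove $(H\setminus S)\cap K=\emptyset$ by the same argument as in Lemma~\ref{LemLinear06}. Writing $H=S{:}\langle\psi\rangle$ with $\psi$ of order $2f$, any putative $k\in(H\setminus S)\cap K$ yields, after replacing $k$ by a suitable power, an element $\psi^f s\in K$ with $s\in S$; since $K\leqslant G_{v_1}$ and $s$ is $\bbF_{q^2}$-linear, $\psi^f s$ fixes $v_1$ but sends $\lambda v_1$ to $\lambda^q v_1=a\lambda v_1+bv_1$ with $a,b\in\bbF_q^\times$ (as $\lambda$ generates $\bbF_{q^2}$), and this lies outside $W$, contradicting $\psi^f s\in G_W$. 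Combining, $H\cap K=\Sp_{m-2}(q^2)$ and $Z=\overline G=\overline H\,\overline K=XY$. The main obstacle is likely the small-case bookkeeping: one must confirm $\GaSp_m(q^2)$ really sits inside $\SiL_m(q^2)<\SiL_n(q)$ with the stated field automorphism $\psi$ acting compatibly (so that the $\psi^f$ trick applies verbatim), and check that $H^{(\infty)}=\Sp_m(q^2)$ genuinely holds under the hypothesis that $m$ is even — if any degenerate configuration arose one would fall back on a \magma\ computation as in the neighbouring lemmas, but with $m$ even and $q\in\{2,4\}$ this should not be needed.
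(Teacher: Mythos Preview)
Your argument is correct, but it is longer than it needs to be because you are re-deriving, inside the symplectic group, the two facts that the paper has already isolated as Lemmas~\ref{LemLinear05} and~\ref{LemLinear06}. The paper's proof is a two-line chain: since $H<\SiL_m(q^2)$,
\[
H\cap K \;=\; H\cap\bigl(\SiL_m(q^2)\cap K\bigr)\;=\;H\cap\SL_{m-1}(q^2)\;=\;\Sp_{m-2}(q^2),
\]
where the middle equality is exactly Lemma~\ref{LemLinear06} (which already absorbs the entire ``$(H\setminus S)\cap K=\emptyset$'' argument you reproduce), and the last equality is Lemma~\ref{LemLinear05} applied over $\bbF_{q^2}$. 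The order count is then identical to yours. In other words, your computation of $S\cap K=\Sp_{m-2}(q^2)$ and your $\psi^f$-contradiction are precisely the contents of those two lemmas specialised to this situation; the paper avoids repeating them by quoting the lemmas. What your approach buys is self-containment (and it makes the point that the whole thing is really the same mechanism as in Lemma~\ref{LemLinear06}); what the paper's approach buys is brevity and a clean modular structure that it reuses again in Lemmas~\ref{LemLinear17}, \ref{LemLinear15} and~\ref{LemLinear18}.
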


\begin{proof}
It is clear that $Z=\PSiL_n(q)$, $X\cong H=\GaSp_m(q^2)$, and $Y\cong K=\SiL_{n-1}(q)$. 
By Lemmas~\ref{LemLinear06} and~\ref{LemLinear05} we have
\[
H\cap K=H\cap(\SiL_m(q^2)\cap K)=H\cap\SL_{m-1}(q^2)=\Sp_{m-2}(q^2).
\]
Observe that $q=2f$ as $q\in\{2,4\}$. It follows that
\[
\frac{|G|}{|K|}=\frac{|\SiL_n(q)|}{|\SiL_{n-1}(q)|}=q^{n-1}(q^n-1)=2fq^{2m-2}(q^{2m}-1)=\frac{|\GaSp_m(q^2)|}{|\Sp_{m-2}(q^2)|}=\frac{|H|}{|H\cap K|}.
\]
This implies $G=HK$ and hence $Z=\overline{G}=\overline{H}\,\overline{K}=XY$.
\end{proof}

\begin{lemma}\label{LemLinear17}
Let $G=\SL(V){:}\langle\phi\gamma\rangle$ with $q\in\{2,4\}$ and $n=2m$ for some even $m$, let $H=\Sp_m(q^2){:}\langle\psi\gamma\rangle<\SL_m(q^2){:}\langle\psi\gamma\rangle<G$, let $K=\SL_{n-1}(q){:}\langle\phi\gamma\rangle<G$, let $Z=\overline{G}$, let $X=\overline{H}$, and let $Y=\overline{K}$. 
Then $H\cap K=\Sp_{m-2}(q^2)$, and $Z=XY$ with $Z=\PSL_n(q).2$, $X=\Sp_m(q^2).(2f)$ and $Y\cong K=\SL_{n-1}(q).2$.
\end{lemma}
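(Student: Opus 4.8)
The plan is to bootstrap off Lemmas~\ref{LemLinear16} and~\ref{LemLinear07}: here $K$ is literally the group ``$K$'' of Lemma~\ref{LemLinear16}, while $H=\Sp_m(q^2){:}\langle\psi\gamma\rangle$ sits inside the group ``$H$'' of that lemma. First I would normalise the data exactly as in the proof of Lemma~\ref{LemLinear16}, conjugating in $G$ (via Lemma~\ref{LemXia04}) so that $K^{(\infty)}=\SL(V)_{v,W}$ with $v=v_1$ and $W$ the image of $\langle v\rangle_{\bbF_q}$ under $\gamma$; then $H_0:=\SL(V_\sharp){:}\langle\psi\gamma\rangle=\SL_m(q^2){:}\langle\psi\gamma\rangle$ and $K=\SL_{n-1}(q){:}\langle\phi\gamma\rangle$ are precisely the groups of Lemma~\ref{LemLinear16}, and $H\leqslant H_0$. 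The identifications $Z=\PSL_n(q).2$, $X=\Sp_m(q^2).(2f)$ and $Y\cong K=\SL_{n-1}(q).2$ are read off from the structure of these groups: $q$ is even, so $\Sp_m(q^2)$ has trivial centre and meets the scalars of $\SL_n(q)$ trivially, $\langle\psi\gamma\rangle$ has order $2f$ and $\langle\phi\gamma\rangle$ has order $2$, and $\langle\psi\gamma\rangle\cap\GL(V_\sharp)=1=\langle\phi\gamma\rangle\cap\GL(V)$.

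For the intersection, Lemma~\ref{LemLinear16} gives $H_0\cap K=\SL_{m-1}(q^2)$, which is contained in $\SL(V_\sharp)=H_0^{(\infty)}$; unwinding the proofs of Lemmas~\ref{LemLinear06} and~\ref{LemLinear16}, it is the stabiliser in $\SL(V_\sharp)$ of $v_1$ and of the hyperplane $\langle v_2,\dots,v_m\rangle_{\bbF_{q^2}}$. Since $\langle\psi\gamma\rangle$ meets $\GL(V_\sharp)$ trivially we have $H\cap\SL(V_\sharp)=\Sp_m(q^2)$, hence
\[
H\cap K=H\cap(H_0\cap K)=(\Sp_m(q^2){:}\langle\psi\gamma\rangle)\cap\SL_{m-1}(q^2)=\Sp_m(q^2)\cap\SL_{m-1}(q^2),
\]
and the computation in the proof of Lemma~\ref{LemLinear07} identifies this last intersection with $\Sp_{m-2}(q^2)$ (equivalently, it is $\Sp(V_\sharp)_{v_1,\langle v_2,\dots,v_m\rangle}\cong\Sp_{m-2}(q^2)$, as in Lemma~\ref{LemLinear05} applied to $V_\sharp$). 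Thus $H\cap K=\Sp_{m-2}(q^2)$.

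Finally, since $q\in\{2,4\}$ we have $q=2f$, so
\[
\frac{|G|}{|K|}=\frac{|\SL_n(q){:}2|}{|\SL_{n-1}(q){:}2|}=q^{n-1}(q^n-1)=2fq^{2m-2}(q^{2m}-1)=\frac{|\Sp_m(q^2){:}(2f)|}{|\Sp_{m-2}(q^2)|}=\frac{|H|}{|H\cap K|},
\]
whence $G=HK$ and therefore $Z=\overline{G}=\overline{H}\,\overline{K}=XY$. I do not expect a genuine obstacle here; the only thing needing care is the bookkeeping around the outer factors $\langle\psi\gamma\rangle$ and $\langle\phi\gamma\rangle$ — confirming that they meet the relevant linear groups trivially, and that the reducible subgroup $\SL_{m-1}(q^2)$ furnished by Lemma~\ref{LemLinear16} is the same one appearing in Lemma~\ref{LemLinear07} (both arise from the canonical identification $V\cong V^*$ defining $\gamma$, so the vectors and hyperplanes involved coincide). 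If one prefers to avoid the appeal to Lemma~\ref{LemLinear16}, one can instead mimic its coset reduction verbatim: for $g\in H\cap K$ write $g=s\psi^i\gamma^i=t\phi^j\gamma^j$, reduce modulo $\GaL(V)$ to force $\gamma^i=\gamma^j$ and hence $s\psi^i=t\phi^j\in\GaSp_m(q^2)\cap\SiL(V)_{v,W}=\Sp_{m-2}(q^2)$ by Lemma~\ref{LemLinear07}, and then use $\Sp_{m-2}(q^2)\leqslant\SL(V_\sharp)$ to conclude $i=j=0$ and $g\in\Sp_{m-2}(q^2)$.
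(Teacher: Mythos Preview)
Your proposal is correct and follows essentially the same route as the paper: reduce $H\cap K$ via Lemma~\ref{LemLinear16} to $H\cap\SL_{m-1}(q^2)$, then invoke the symplectic computation of Lemma~\ref{LemLinear05} (which you reach through Lemma~\ref{LemLinear07}) to obtain $\Sp_{m-2}(q^2)$, and finish by the index count. The paper's proof is terser---it cites Lemma~\ref{LemLinear05} directly and defers the order calculation to ``similarly as in the proof of Lemma~\ref{LemLinear07}''---but your more explicit bookkeeping on the outer factors and on the identification of the $\SL_{m-1}(q^2)$ subgroup is entirely in the same spirit.
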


\begin{proof}
It is clear that $Z=\PSL_n(q).2$, $X=\Sp_m(q^2).(2f)$ and $Y\cong K=\SL_{n-1}(q).2$.
By Lemmas~\ref{LemLinear16} and~\ref{LemLinear05} we have
\[
H\cap K=H\cap((\SL_m(q^2){:}\langle\psi\gamma\rangle)\cap K)=H\cap\SL_{m-1}(q^2)=\Sp_{m-2}(q^2).
\]
Then similarly as in the proof of Lemma~\ref{LemLinear07} we obtain $Z=XY$.
\end{proof}

Finally we construct $(X,Y)$ in Row~8 of Table~\ref{TabLinear}, which will be displayed in Lemma~\ref{LemLinear09}. The next lemma will also be needed for symplectic groups.

\begin{lemma}\label{LemLinear08}
Let $G=\Sp_6(q)$ with $q$ even, let $H=\G_2(q)<G$, and let $K=\Sp_4(q)<\N_2[G]$. Then $G=HK$ with $H\cap K=\SL_2(q)$.
\end{lemma}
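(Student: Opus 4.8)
The plan is to mimic the structure of the preceding lemmas (especially Lemma~\ref{LemLinear03}), replacing the vector stabilizer by the stabilizer $\N_2[G]$ of a nondegenerate $2$-subspace and using the known embedding of $\G_2(q)$ in $\Sp_6(q)$ for $q$ even. First I would recall that for $q$ even, $\G_2(q)$ acts on the natural $6$-dimensional symplectic space $V$ over $\bbF_q$, and that the maximal parabolic and the maximal-rank subgroups of $\G_2(q)$ are classified; in particular the stabilizer in $\G_2(q)$ of a nondegenerate $2$-subspace of $V$ is $\SL_2(q)\times\SL_2(q)$ intersected appropriately, but more to the point the stabilizer of a suitable nondegenerate $2$-space is $\SL_2(q)$ sitting as a (long or short root) $\SL_2$ subgroup. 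The cleanest route is to fix a nondegenerate $2$-subspace $U$ with $V=U\perp U^\perp$, so $K=\Sp(U)\times\Sp(U^\perp)\cong\Sp_2(q)\times\Sp_4(q)$; here the statement takes $K=\Sp_4(q)=\Sp(U^\perp)$, i.e. $K<\N_2[G]$ is the $\Sp_4$-factor acting trivially on $U$. Then I would identify $H\cap K$ from the subgroup structure of $\G_2(q)$: the elements of $\G_2(q)$ fixing $U$ pointwise and acting on $U^\perp$ form an $\SL_2(q)$, which is exactly $H\cap K$. This can be quoted from~\cite[4.3.7]{Wilson2009} or deduced from the action of $\G_2(q)$ on $2$-spaces together with the fact that the generic $2$-space stabilizer in $\G_2(q)$ has shape $\SL_2(q)\times\SL_2(q)$ or (for a nondegenerate $2$-space of the right type) reduces to a single $\SL_2(q)$ after intersecting with the chosen $\Sp_4$ factor.

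The main computation is then a routine order count. Using $|\Sp_6(q)|=q^9(q^2-1)(q^4-1)(q^6-1)$, $|\Sp_4(q)|=q^4(q^2-1)(q^4-1)$, $|\G_2(q)|=q^6(q^2-1)(q^6-1)$, and $|\SL_2(q)|=q(q^2-1)$, one checks
\[
\frac{|G|}{|K|}=\frac{|\Sp_6(q)|}{|\Sp_4(q)|}=q^5(q^6-1)=\frac{|\G_2(q)|}{|\SL_2(q)|}=\frac{|H|}{|H\cap K|},
\]
so that $|H||K|=|G||H\cap K|$, which (since $H\cap K\leqslant HK$) forces $G=HK$. I would present exactly this display, perhaps preceded by one sentence justifying $H\cap K=\SL_2(q)$ via Wilson's book, and noting that for $q\geqslant4$ we have $\G_2(q)'=\G_2(q)$ so no issue of a derived subgroup arises (unlike in Lemma~\ref{LemLinear03}, here $q=2$ also gives $\G_2(2)$ rather than $\G_2(2)'$, so $H=\G_2(2)$ genuinely and the order formula still applies — but I should double-check the $q=2$ case separately by \magma\ if the generic intersection argument is delicate, just as was done in Lemma~\ref{LemLinear03}).

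The step I expect to be the main obstacle is pinning down $H\cap K$ precisely, i.e.\ verifying that the stabilizer in $\G_2(q)$ of the chosen nondegenerate $2$-subspace $U$, intersected with the $\Sp(U^\perp)$ direct factor of $\N_2[G]$, is exactly $\SL_2(q)$ and not something larger or a proper subgroup. There are two conjugacy classes of $2$-subspaces under $\G_2(q)$ when one tracks the associated bilinear/quadratic data, and the embedding $\G_2(q)<\Sp_6(q)$ interacts with the symplectic form in a way that one must be careful about; the safest approach is to cite the explicit description of subgroups of $\G_2(q)$ in~\cite[4.3.7]{Wilson2009} (which is already invoked in Lemma~\ref{LemLinear03} for the parabolic), reading off that the relevant stabilizer has an $\SL_2(q)$ acting as $\Sp_4$-type on a complement, and then confirming the arithmetic consistency via the order equation above — if the orders match with $H\cap K=\SL_2(q)$, that simultaneously proves the factorization and confirms the intersection has the claimed order, leaving only the isomorphism type to be quoted.
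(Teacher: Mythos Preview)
Your approach is essentially the same as the paper's: identify $H\cap K=\SL_2(q)$ from Wilson's description of the subgroups of $\G_2(q)$ and then verify $G=HK$ by the order equation
\[
\frac{|G|}{|K|}=\frac{|\Sp_6(q)|}{|\Sp_4(q)|}=q^5(q^6-1)=\frac{|\G_2(q)|}{|\SL_2(q)|}=\frac{|H|}{|H\cap K|}.
\]
Two minor points: the paper cites \cite[4.3.6]{Wilson2009} rather than 4.3.7 for the intersection, and it does not treat $q=2$ separately by \magma\ here---since $H=\G_2(2)$ (not $\G_2(2)'$), the generic argument already covers $q=2$.
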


\begin{proof}
From~\cite[4.3.6]{Wilson2009} we see that $H\cap K=\SL_2(q)$. Hence
\[
\frac{|G|}{|K|}=\frac{|\Sp_6(q)|}{|\Sp_4(q)|}=q^5(q^6-1)=\frac{|\G_2(q)|}{|\SL_2(q)|}=\frac{|H|}{|H\cap K|},
\]
and so $G=HK$.
\end{proof}

\begin{remark}
If we let $H=\G_2(q)'$ in Lemma~\ref{LemLinear08}, then computation in \magma~\cite{BCP1997} shows that the conclusion $G=HK$ would not hold for $q=2$. 
\end{remark}

\begin{lemma}\label{LemLinear09}
Let $G=\SL(V)=\SL_6(q)$ with $n=6$ and $q$ even, let $H=\G_2(q)<\Sp_6(q)<G$, let $K=G_{v,W}$, let $Z=\overline{G}$, let $X=\overline{H}$, and let $Y=\overline{K}$. 
Then $H\cap K=\SL_2(q)$, and $Z=XY$ with $Z=\PSL_6(q)$, $X=\G_2(q)$ and $Y\cong K=\SL_5(q)$.
\end{lemma}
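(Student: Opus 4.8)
The plan is to strip the intersection down in two stages, mirroring how Lemmas~\ref{LemLinear04} and~\ref{LemLinear07} were obtained from Lemma~\ref{LemLinear03}: first pass from $\SL_6(q)$ to $\Sp_6(q)$ using Lemma~\ref{LemLinear05}, then from $\Sp_6(q)$ down to $\G_2(q)$ using Lemma~\ref{LemLinear08}. First I would settle the structural claims. That $Z=\overline{G}=\PSL_6(q)$ is immediate. Since $q$ is even, the only scalar matrix contained in $\Sp_6(q)$ is the identity, so $H=\G_2(q)$ meets the scalars trivially and $X=\overline{H}\cong H=\G_2(q)$; similarly the only scalar fixing $v$ is the identity, so $Y=\overline{K}\cong K$, and $K=\SL(V)_{v,W}\cong\SL_5(q)$ because $V=\langle v\rangle\oplus W$ forces the determinant of an element of $K$ to coincide with the determinant of its restriction to $W$.

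Next I would compute $H\cap K$. Write $S=\Sp_6(q)$ for the symplectic group with $H<S<G$, preserving an alternating form $\beta$. Then $v$ is automatically $\beta$-isotropic, and we may write $W=w_0^{\perp}$ with $\beta(v,w_0)=1$, which is possible precisely because $v\notin W$. Any $g\in S$ fixing $v$ and stabilizing $W=w_0^{\perp}$ also stabilizes $\langle w_0\rangle$, and then $\beta(v,w_0)=\beta(g v,g w_0)$ forces $g(w_0)=w_0$; hence $g$ fixes the nondegenerate plane $\langle v,w_0\rangle$ pointwise, so $S\cap K=S_{v,W}$ is exactly a subgroup $\Sp_4(q)<\N_2[S]$ of the type featuring in Lemma~\ref{LemLinear08} (this also follows by tracing the proof of Lemma~\ref{LemLinear05} with $n=6$). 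Since $S=\G_2(q)\Sp_4(q)$ is a factorization by Lemma~\ref{LemLinear08}, Lemma~\ref{LemXia04} yields
\[
H\cap K=H\cap(S\cap K)=\G_2(q)\cap\Sp_4(q)=\SL_2(q).
\]

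Finally a direct order count finishes the proof:
\[
\frac{|G|}{|K|}=\frac{|\SL_6(q)|}{|\SL_5(q)|}=q^5(q^6-1)=\frac{|\G_2(q)|}{|\SL_2(q)|}=\frac{|H|}{|H\cap K|},
\]
so $G=HK$, whence $Z=\overline{G}=\overline{H}\,\overline{K}=XY$. The step requiring the most care is the identification of $S_{v,W}$ with the specific $\Sp_4(q)<\N_2[S]$ occurring in Lemma~\ref{LemLinear08}, i.e. verifying that, up to $\Sp_6(q)$-conjugacy, the pair $(H,S_{v,W})$ is the one treated there; this is exactly where Lemma~\ref{LemXia04} is invoked, since it guarantees that the isomorphism type of the intersection of the two factors of a factorization is unaffected by conjugating the factors independently, in keeping with the conventions recorded in the remark after Theorem~\ref{ThmLinear}.
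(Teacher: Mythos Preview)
Your argument is correct and follows the same two-step reduction as the paper: first $\Sp_6(q)\cap K=\Sp_4(q)$ via Lemma~\ref{LemLinear05}, then $\G_2(q)\cap\Sp_4(q)=\SL_2(q)$ via Lemma~\ref{LemLinear08}, followed by the identical order count. You simply supply more detail---the explicit description of $S_{v,W}$ as the pointwise stabilizer of the nondegenerate plane $\langle v,w_0\rangle$, and the appeal to Lemma~\ref{LemXia04} to justify that the particular conjugates of $\G_2(q)$ and $\Sp_4(q)$ occurring here still intersect in $\SL_2(q)$---where the paper leaves these implicit in its citations of Lemmas~\ref{LemLinear05} and~\ref{LemLinear08}.
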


\begin{proof}
It is clear that $Z=\PSL_6(q)$, $X=\G_2(q)$, and $Y\cong K=\SL_5(q)$. By Lemmas~\ref{LemLinear05} and~\ref{LemLinear08} we have
\[
H\cap K=H\cap(\Sp_6(q)\cap K)=H\cap\Sp_4(q)=\SL_2(q).
\]
Hence
\[
\frac{|G|}{|K|}=\frac{|\SL_6(q)|}{|\SL_5(q)|}=q^5(q^6-1)=\frac{|\G_2(q)|}{|\SL_2(q)|}=\frac{|H|}{|H\cap K|},
\]
and so $G=HK$, which implies that $Z=\overline{G}=\overline{H}\,\overline{K}=XY$.
\end{proof}

\begin{remark}
If we let $H=\G_2(q)'$ in Lemma~\ref{LemLinear09} then the conclusion $Z=XY$ would not hold for $q=2$. 
\end{remark}

\section{Sporadic cases of $(X,Y)$ in Table~\ref{TabLinear}}\label{SecLinear02}

The factor pairs $(X,Y)$ in Rows~9--13 of Table~\ref{TabLinear} are constructed in Lemmas~\ref{LemLinear10}--\ref{LemLinear14} below, which are verified by computation in \magma~\cite{BCP1997}.

\begin{lemma}\label{LemLinear10}
Let $Z=\PSL_2(9)\cong\A_6$. Then $Z$ has precisely two conjugacy classes of subgroups isomorphic to $\A_5$. Let $X$ and $Y$ be two subgroups from these two classes respectively. Then $Z=XY$ with $X\cap Y=\D_{10}$.
\end{lemma}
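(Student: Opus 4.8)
The plan is to prove the lemma directly from the action of $Z\cong\A_6$ on a set $\Omega$ of $6$ points, rather than by machine computation, in three steps: (i) show that $Z$ has exactly two conjugacy classes of $\A_5$-subgroups, the point stabilizers $Z_\omega$ (which are intransitive on $\Omega$) and a class of subgroups transitive on $\Omega$; (ii) deduce $Z=XY$ from transitivity; (iii) identify $X\cap Y$.

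For step (i), let $A\leqslant Z$ with $A\cong\A_5$. Since $\A_5$ has no proper subgroup of index less than $5$, each orbit of $A$ on $\Omega$ has length $1$, $5$ or $6$; and $A$ fixes at most one point, for otherwise it would act trivially on $\Omega$, contradicting faithfulness. Hence $A$ is either transitive on $\Omega$, or has orbit lengths $1$ and $5$; in the latter case $A$ fixes a point $\omega$ and, by order, $A=Z_\omega\cong\A_5$. The stabilizers $Z_\omega$ form one class of $6$ subgroups, each maximal in $Z$ (a subgroup strictly between $Z_\omega$ and $Z$ would have index $2$ or $3$ in $Z$, which is impossible) and hence self-normalizing. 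A transitive $\A_5$-subgroup exists: the image of $\PSL_2(5)\cong\A_5$ in its $2$-transitive action on the projective line $\PG_1(5)$ is a simple group of order $60$, hence lies in $Z$, and it is not conjugate to any $Z_\omega$ because conjugation preserves orbit lengths on $\Omega$. That the transitive $\A_5$-subgroups form a \emph{single} class I would obtain from a count of injective transitive homomorphisms $\A_5\to\Sym(\Omega)$: as $\D_{10}$ is the unique conjugacy class of index-$6$ subgroups of $\A_5$ and is self-normalizing in $\A_5$, there is just one such homomorphism up to post-conjugation by $\Sym(\Omega)$, and the associated $\A_5$-set has trivial automorphism group, so there are exactly $|\Sym(\Omega)|=720$ of them; dividing by $|\Aut(\A_5)|=120$ gives exactly $6$ transitive $\A_5$-subgroups of $Z$, forming one class since each is maximal in $Z$. (All of this is classical for $\A_6\cong\PSL_2(9)$ and may instead be cited, or verified by computation in \magma~\cite{BCP1997}.)

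For step (ii), take $X$ and $Y$ from the two classes; after relabelling assume $Y=Z_\omega$ and $X$ transitive on $\Omega$. Then $X\cap Y=X_\omega$, and the orbit--stabilizer theorem gives $|X\cap Y|=|X|/|\Omega|=10$, so
\[
|XY|=\frac{|X|\,|Y|}{|X\cap Y|}=\frac{60\cdot60}{10}=360=|Z|,
\]
whence $Z=XY$. For step (iii), $X\cap Y$ has order $10$, hence is isomorphic to $\Z/10$ or to $\D_{10}$; but $Z\cong\A_6\leqslant\Sy_6$ has no element of order $10$, since such an element would require disjoint cycles of lengths divisible by $2$ and by $5$ and hence at least $7$ moved points. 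Therefore $X\cap Y$ is not cyclic, so $X\cap Y\cong\D_{10}$. The main obstacle is the count in step (i) confirming that the transitive $\A_5$-subgroups of $\A_6$ do not split into more than one conjugacy class; granting that, steps (ii) and (iii) are immediate.
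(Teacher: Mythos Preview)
Your argument is correct and complete. The one phrase that might cause a reader to pause is ``for otherwise it would act trivially on $\Omega$''; this is right because if $A$ fixes two points then the remaining four points must split into orbits of length $1$, $5$ or $6$, forcing all of them to be fixed, but you might spell that out. Your maximality argument for the intransitive $\A_5$ (no subgroup of index $2$ or $3$ in the simple group $Z$) applies verbatim to the transitive $\A_5$ as well, which you use at the end of step~(i); you may want to say so explicitly rather than assert maximality a second time without comment.

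The paper itself gives no hand proof of this lemma: it simply records that Lemmas~\ref{LemLinear10}--\ref{LemLinear14} are ``verified by computation in \magma~\cite{BCP1997}''. Your approach is therefore genuinely different. The computational route is uniform across several sporadic rows of Table~\ref{TabLinear} and requires no case-specific reasoning, which is its advantage in the context of the paper. Your approach, by contrast, is self-contained and illuminating for this particular case: it explains \emph{why} the two $\A_5$-classes exist (one intransitive, one transitive on six points), why the factorization holds (transitivity of $X$ plus $Y=Z_\omega$), and why the intersection is $\D_{10}$ (order $10$ by orbit--stabilizer, and $\A_6$ has no element of order $10$). The counting of transitive $\A_5$-subgroups via homomorphisms and $|\Aut(\A_5)|$ is a nice touch that avoids invoking the outer automorphism of $\Sy_6$.
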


The maximal subgroups of almost simple groups with socle $\PSL_3(4)$ can be found in ~\cite{CCNPW1985}.

\begin{lemma}\label{LemLinear11}
Let $L=\PSL_3(4)$, let $Z=L.2$ be an almost simple group with socle $L$ such that $Z$ has three conjugacy classes of maximal subgroups isomorphic to $\PGL_2(7)$, and let $X$ be such a maximal subgroup of $Z$. Then there are precisely two conjugacy classes of maximal subgroups $Y$ of $Z$ isomorphic to $\M_{10}$ such that $Z=XY$. For each such pair $(X,Y)$ we have $X\cap Y=\Sy_3$.
\end{lemma}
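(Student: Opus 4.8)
The statement to prove (Lemma~\ref{LemLinear11}) is a computational claim about maximal factorizations of $\PSL_3(4).2$, so the proof is essentially a verification in \magma. Below I describe how I would organise that verification so that the reader can follow and reproduce it.

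\medskip

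The plan is to work inside a fixed copy of $Z=L.2$ with $L=\PSL_3(4)$, realised concretely (for instance as a permutation group of degree $21$ on the points of $\PG(2,4)$, extended by the graph-field automorphism that makes the relevant $\PGL_2(7)$ subgroups appear). First I would recall from~\cite{CCNPW1985} the list of maximal subgroups of the eight almost simple groups with socle $\PSL_3(4)$, and pick out the unique extension type $Z=L.2$ for which $\PGL_2(7)$ occurs as a maximal subgroup (it embeds via $\PSL_2(7)\cong\PSL_3(2)<\PSL_3(4)$, and the outer element of $Z$ is needed to close it up to $\PGL_2(7)$); the hypothesis that there are three classes of such subgroups pins down which $Z$ we are in, and likewise $\M_{10}=\Alt_6.2$ is maximal in this $Z$. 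Then I would have \magma\ compute the conjugacy classes of maximal subgroups of $Z$, extract representatives $X_1,X_2,X_3$ of the three classes isomorphic to $\PGL_2(7)$ and representatives of the classes isomorphic to $\M_{10}$.

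\medskip

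Next, by Lemma~\ref{LemXia04} the truth of $Z=X_iY_j$ depends only on the conjugacy classes of $X_i$ and $Y_j$, so it suffices to test one representative from each class; moreover, since $|X_i||Y_j|=|\PGL_2(7)|\cdot|\M_{10}|=336\cdot720$ and $|Z|=2|\PSL_3(4)|=40320$, a product $X_iY_j$ equals $Z$ exactly when $|X_i\cap Y_j|=336\cdot720/40320=6$. So for each of the (at most $3\times(\text{number of }\M_{10}\text{-classes})$) pairs I would compute $|X_i\cap Y_j|$ and record when it equals $6$; by the order count this is equivalent to $Z=X_iY_j$. Fixing $X=X_1$ say, this isolates precisely the classes of $Y$ with $Z=XY$; the claim is that there are exactly two such classes. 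For those pairs I would additionally check that $X\cap Y\cong\Sym_3$ (the only group of order $6$ that can arise here is either $\Sym_3$ or $\mathrm{C}_6$, and one verifies it is nonabelian, e.g. by checking its derived subgroup is nontrivial).

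\medskip

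There is essentially no mathematical obstacle here beyond bookkeeping: the one point requiring care is matching our concrete $Z$ with the ATLAS data so that ``three classes of $\PGL_2(7)$'' and ``$\M_{10}$ maximal'' are simultaneously correct — the group $\mathrm{Out}(\PSL_3(4))\cong 2\times\Sym_3$ has several index-$2$ overgroups of $L$, and only one of them has this configuration, so I would explicitly identify it (it is the extension by a graph-field automorphism, in ATLAS notation $\PSL_3(4).2_2$ or $.2_3$ depending on convention) and state this identification clearly. The remaining content is the routine \magma\ computation of subgroup lattices and intersection orders, which I would summarise in a short paragraph citing~\cite{BCP1997}, exactly in the style of the preceding computational lemmas in this section.
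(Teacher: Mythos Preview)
Your proposal is correct and takes essentially the same approach as the paper: the paper's proof of this lemma consists solely of the statement that it is ``verified by computation in \magma~\cite{BCP1997}'', and your write-up is a more detailed (and entirely reasonable) description of how such a verification would be organised. The order computation $|X\cap Y|=336\cdot720/40320=6$ and the appeal to Lemma~\ref{LemXia04} to reduce to one representative per class are exactly the right ingredients.
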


The maximal subgroups of $\SL_4(2)$ can be found in ~\cite{CCNPW1985}.

\begin{lemma}\label{LemLinear12}
Let $Z=\SL(V)=\SL_4(2)$ with $(n,q)=(4,2)$, let $X=Z_{v,W}$ or $Z_v$, and let $Y=\A_7$ be a maximal subgroup of $Z$. Then $Z=XY$ with 
\[
X\cap Y=
\begin{cases}
7{:}3&\textup{if }X=Z_{v,W}\\
\PSL_2(7)&\textup{if }X=Z_v.
\end{cases}
\]
\end{lemma}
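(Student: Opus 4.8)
The plan is to verify the factorization $Z=XY$ directly from an order count, exactly as in the preceding lemmas of this section, and then identify the two intersections. Set $Z=\SL_4(2)$, which has order $|\SL_4(2)|=20160=|\Alt(8)|$; recall the exceptional isomorphism $\SL_4(2)\cong\Alt(8)$, under which the stabilizer $Y=\Alt(7)$ is the point stabilizer of the natural degree-$8$ action. First I would record the orders of the two candidate subgroups $X$: the vector stabilizer $Z_v$ has index $|V\setminus\{0\}|=2^4-1=15$ in $Z$, so $|Z_v|=20160/15=1344$ and $Z_v\cong 2^3{:}\SL_3(2)$; the stabilizer $Z_{v,W}$ of an incident antiflag (a nonzero vector and a complementary hyperplane) has $|Z_{v,W}|=|\SL_3(2)|=168$ and is isomorphic to $\SL_3(2)$, sitting inside $Z_v$ as a complement to the $2^3$. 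Since $|Y|=|\Alt(7)|=2520$, the product formula $|XY|=|X||Y|/|X\cap Y|$ reduces the whole statement to computing $|X\cap Y|$ in each case and checking it equals $|X||Y|/|Z|$.

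For $X=Z_v$: the claim is $X\cap Y\cong\PSL_2(7)$, of order $168$, which would give $|XY|=1344\cdot2520/168=20160=|Z|$, hence $Z=Z_vY$. To see this without machine computation I would argue that $Y=\Alt(7)$ acts on the $15$ nonzero vectors of $V$ (equivalently on the $15$ points of $\PG(3,2)$), and that this action is transitive — indeed $\Alt(7)$ has a well-known transitive action of degree $15$ coming from the two classes of $\Alt(7)$ inside $\Alt(8)$, or directly from $15=|\Alt(7)|/|\PSL_2(7)|$ and the embedding $\PSL_2(7)<\Alt(7)$. Transitivity of $Y$ on the $15$ vectors is equivalent to $Z=Z_vY$, and then $Y\cap Z_v$ is the point stabilizer in this degree-$15$ action, which is $\PSL_2(7)$. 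For $X=Z_{v,W}$: the claim is $X\cap Y\cong 7{:}3$ of order $21$, giving $|XY|=168\cdot2520/21=20160=|Z|$. Here $Z_{v,W}$ is a complement $\SL_3(2)$ to $Z_v^{(\infty)}$'s unipotent radical, and one checks that a $\PSL_2(7)\cong\SL_3(2)$-subgroup $Y\cap Z_v$ meets the chosen antiflag stabilizer in a Borel-type subgroup; more concretely, $Z_{v,W}$ acts on the $\lvert Z_v : Z_{v,W}\rvert=8$ hyperplanes complementary to $\langle v\rangle$, and one verifies this action is the natural degree-$8$ action of $\SL_3(2)$ on... — at this point the cleanest route really is to note $|Z|/|Z_{v,W}|=120$ and exhibit $Y$ as transitive on the $120$ antiflags, whence $Y\cap Z_{v,W}$ has order $2520/120=21$ and, being a group of order $21$ inside $\Alt(7)$, must be the Frobenius group $7{:}3$.

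The main obstacle is the verification that $Y=\Alt(7)$ is transitive on the relevant sets (the $15$ vectors, resp.\ the $120$ incident vector–hyperplane pairs): this is precisely where the exceptional embedding $\Alt(7)<\Alt(8)\cong\SL_4(2)$ does something non-generic, and a purely structural argument requires knowing which permutation representations of $\Alt(7)$ of degrees $15$ and $120$ occur and matching them with the geometry of $\PG(3,2)$. Since the lemma is explicitly flagged in the text as being ``verified by computation in \magma~\cite{BCP1997}'', the honest and efficient proof is the computational one: construct $Z=\SL_4(2)$, take $X\in\{Z_v,Z_{v,W}\}$ and a maximal $Y\cong\Alt(7)$, and have \magma\ confirm $|XY|=|Z|$ and read off $X\cap Y$ as $\PSL_2(7)$ or $7{:}3$ accordingly. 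I would present the order-count reduction above to make the computation transparent, then defer to \magma\ for the transitivity facts, in keeping with the style of Lemmas~\ref{LemLinear10} and~\ref{LemLinear11}.
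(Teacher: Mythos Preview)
Your proposal is correct and ultimately takes the same approach as the paper, which simply records that Lemmas~\ref{LemLinear10}--\ref{LemLinear14} ``are verified by computation in \magma''. Your surrounding order-count reduction and the sketch via the isomorphism $\SL_4(2)\cong\Alt(8)$ are sound (and make the computation transparent), but the paper offers no argument beyond the \magma\ check, so you are supplying more than it does rather than deviating from it.
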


\begin{lemma}\label{LemLinear13}
Let $G=\SL(V)=\SL_4(3)$ with $(n,q)=(4,3)$, let $K=G_v$, let $Z=\overline{G}$, and let $Y=\overline{K}$. Then $Z$ has precisely two (out of four) conjugacy classes of subgroups $X$ isomorphic to $\Sy_5$ such that $Z=XY$, while each subgroup $X$ of $Z$ of the form $4\times\A_5$ or $2^4{:}\A_5$ satisfies $Z=XY$. For each such pair $(X,Y)$ we have $Y\cong K=3^3{:}\SL_3(3)$ and
\[
X\cap Y=
\begin{cases}
3&\textup{if }X=\Sy_5\\
\Sy_3&\textup{if }X=4\times\A_5\\
\SL_2(3)&\textup{if }X=2^4{:}\A_5.
\end{cases}
\]
\end{lemma}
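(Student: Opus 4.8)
The plan is to establish Lemma~\ref{LemLinear13} by a finite computation in \magma~\cite{BCP1997}. First I would build $Z=\PSL_4(3)$ together with its faithful primitive action of degree $40$ on the $1$-spaces of $V$, and take $Y$ to be a point stabilizer in this action. The only nontrivial scalar in $\SL_4(3)$ is $-I_4$, and it does not fix the nonzero vector $v$, so $K=G_v$ meets $\{\pm I_4\}$ trivially; since $K$ has the standard parabolic shape $3^3{:}\SL_3(3)$ (unipotent radical $\bbF_3^3$, Levi complement $\SL_3(3)$), the map $\,\overline{\phantom{\varphi}}\,$ restricts to an isomorphism on $K$, whence $Y=\overline{K}\cong K=G_v=3^3{:}\SL_3(3)$; this is the $P_1$ parabolic of $Z$, of index $[Z:Y]=40$, and a quick structural check (composition factors, or \texttt{IdentifyGroup} on the relevant quotients) confirms its isomorphism type.

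Next I would enumerate the conjugacy classes of subgroups of $Z$ of orders $120$, $240$ and $960$, retaining those isomorphic to $\Sy_5$, $4\times\A_5$ and $2^4{:}\A_5$ respectively. As $|Z|=6065280$ is large, rather than calling \texttt{Subgroups(Z)} I would restrict the search to these orders, or carry it out inside an overgroup such as the maximal subgroup $\PSp_4(3){:}2$ (which already contains $\Sy_6\geqslant\Sy_5$ as well as $2^4{:}\A_5$). This step is expected to return exactly four classes of $\Sy_5$, together with the claimed nonempty lists of classes of $4\times\A_5$ and of $2^4{:}\A_5$.

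For the factorization test I would appeal to Lemmas~\ref{LemXia03} and~\ref{LemXia04}: whether $Z=XY$ — and, up to isomorphism, the intersection $X\cap Y$ — depends only on the $Z$-conjugacy classes of $X$ and $Y$, so it suffices to take one representative $X$ of each class and compute $|X\cap Y|$. Since $|XY|=|X||Y|/|X\cap Y|$, we have $Z=XY$ exactly when
\[
|X\cap Y|=\frac{|X|\,|Y|}{|Z|}=\frac{|X|}{40},
\]
i.e. when $|X\cap Y|=3$, $6$, $24$ in the three respective cases. The computation should then show that precisely two of the four $\Sy_5$-classes meet this condition (with $X\cap Y$ forced to be cyclic of order $3$), while every class of $4\times\A_5$ and every class of $2^4{:}\A_5$ does; a final call to \texttt{IdentifyGroup} identifies the corresponding intersections as $\Sy_3$ and $\SL_2(3)$, which yields the lemma.

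The main obstacle is not any individual step but the bookkeeping around the subgroup enumeration: making it feasible for a group of order roughly $6\times10^6$, confirming that there really are exactly four $Z$-classes of $\Sy_5$ and identifying which two are the ``good'' ones, and making sure the good/bad dichotomy is genuinely an invariant of the class. This last point is exactly why Lemma~\ref{LemXia04} is invoked, and is what legitimizes testing a single representative per class against the fixed $Y$.
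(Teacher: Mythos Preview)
Your proposal is correct and follows essentially the same approach as the paper, which simply records that the lemma is verified by computation in \magma{} without further elaboration. Your write-up is considerably more detailed than the paper's one-line appeal to computation; the only caveat is that the alternative of searching inside a single maximal subgroup such as $\PSp_4(3){:}2$ would require a separate check that every $Z$-class of the relevant shapes actually meets it, so the order-restricted search in $Z$ itself is the safer route for confirming the count of exactly four $\Sy_5$-classes.
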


\begin{lemma}\label{LemLinear14}
Let $G=\SL(V)=\SL_6(3)$ with $(n,q)=(6,3)$, let $K=G_v$, let $Z=\overline{G}$, let $X=\PSL_2(13)$ be a subgroup of $Z$ (there are two conjugacy classes of such subgroups in $Z$), and let $Y=\overline{K}$. Then $Z=XY$ with $Z=\PSL_6(3)$, $Y=5^3{:}\SL_5(3)$ and $X\cap Y=3$.
\end{lemma}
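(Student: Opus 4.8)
The statement is one of the sporadic cases and, as announced just before it, the intended proof is a direct verification in \magma; the group $Z=\PSL_6(3)$, of order roughly $2\times10^{16}$, is small enough to be handled as a matrix group over $\bbF_3$ in dimension $6$, or equivalently as a permutation group of degree $364$ on the one-spaces of $V$. The plan is as follows. First I would realize $Z$ together with $Y$: since $K=G_v$ is the full stabilizer in $\SL_6(3)$ of the nonzero vector $v$ it has the parabolic shape $3^5{:}\SL_5(3)$, and as $-I$ moves $v$ we get $Z(\SL_6(3))\cap K=1$, so $Y=\overline{K}\cong K$ is precisely a point stabilizer of the natural action of $Z$ on one-spaces.

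The next step is to pin down the subgroups $X\cong\PSL_2(13)$ of $Z$. A global traversal of the subgroup lattice is hopeless at this order, so instead I would obtain $\PSL_2(13)$ either via \magma's algorithms for the maximal subgroups of $Z$ (into which it embeds), or concretely from an explicit $6$-dimensional $\bbF_3$-representation of a suitable central cover of $\PSL_2(13)$. One then checks that $Z$ has exactly two conjugacy classes of such subgroups; they are afforded by two $6$-dimensional modules dual to one another, so the two classes are interchanged by the graph automorphism $\gamma$ (inverse-transpose), and by Lemma~\ref{LemXia04} it suffices to treat one representative $X$ of each class.

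For the factorization itself, observe the order identity $|X|\,|Y|/|Z| = 1092\cdot3^5\,|\SL_5(3)|\,/\,|\PSL_6(3)| = 3$; hence $Z=XY$ is equivalent to $|X\cap Y|=3$, that is, to $X$ acting transitively on the $364$ one-spaces of $V$ with point stabilizer of order $3$. I would confirm this in \magma by computing $X\cap Y^{g}$ for suitable $g$ and checking that some conjugate yields $|X\cap Y|=3$; the order identity then forces $Z=XY$, and $X\cap Y\cong\mathrm{C}_3$ is automatic. Alternatively, transitivity of $X$ on the points of $\PG(5,3)$ could be read off from the classification of point-transitive linear groups in the spirit of \cite{CK1979}. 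The one real difficulty here is purely computational: because of the size of $Z$ one must avoid any subgroup-lattice or bare conjugacy enumeration in $Z$, and instead construct the two $\PSL_2(13)$-classes by the methods above before performing the single intersection test.
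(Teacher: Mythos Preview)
Your proposal is correct and follows the same approach as the paper: the lemma is one of the sporadic cases announced to be ``verified by computation in \magma'', and your write-up simply spells out in detail how such a verification is organized (realize $Z$ on the $364$ one-spaces, build $Y$ as a point stabilizer, construct the two $\PSL_2(13)$-classes via the $6$-dimensional $\bbF_3$-modules, and check $|X\cap Y|=3$ using the order identity). One minor remark: once you have $Z=XY$ for a single representative $X$, Lemma~\ref{LemXia04} already gives $Z=XY^g$ with $X\cap Y^g\cong X\cap Y$ for \emph{every} $g\in Z$, so there is no need to search over ``suitable $g$''---a single intersection computation for each of the two classes suffices.
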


In the following lemma we construct the factor pairs $(X,Y)$ in Rows~14 and~15 of Table~\ref{TabLinear}.

\begin{lemma}\label{LemLinear15}
Let $G=\SiL(V)=\SiL_{12}(q)$ with $n=12$ and $q\in\{2,4\}$, let $H=\GaG_2(q^2)=\Aut(\G_2(q^2))<\SiL_6(q^2)<G$, let $K=G_{v,W}$, let $Z=\overline{G}$, let $X=\overline{H}$, and let $Y=\overline{K}$. 
Then $H\cap K=\SL_2(q^2)$, and $Z=XY$ with $Z=\PSiL_{12}(q)$, $X=\GaG_2(q^2)$ and $Y\cong K=\SiL_{11}(q)$.
\end{lemma}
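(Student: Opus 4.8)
The plan is to follow the template of Lemma~\ref{LemLinear09}, transported from $\bbF_q$ to $\bbF_{q^2}$. The three identifications are immediate, and I would record them first: $\G_2$ has trivial centre, so $H=\GaG_2(q^2)$ meets the group of scalars trivially and $X=\ov{H}\cong H=\GaG_2(q^2)$; the subgroup $K=G_{v,W}$ is the stabilizer in $\SiL_{12}(q)$ of the vector $v$ and the complementary hyperplane $W$, hence $K=\SL_{11}(q){:}\langle\phi\rangle=\SiL_{11}(q)$, which again meets the scalars trivially, so $Y=\ov{K}\cong K=\SiL_{11}(q)$; and $Z=\ov{G}=\PSiL_{12}(q)$. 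Specializing to $q=2$ and $q=4$ then recovers Rows~14 and~15 of Table~\ref{TabLinear}.

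The substantive step is the computation of $H\cap K$. I would regard $V$ as the $6$-dimensional $\bbF_{q^2}$-space $V_\sharp$, so that
\[
H=\GaG_2(q^2)<\GaSp_6(q^2)=\GaSp(V_\sharp)<\SiL_6(q^2)=\SiL(V_\sharp)<G ,
\]
and therefore $H\cap K=H\cap(\GaSp_6(q^2)\cap K)$. Applying Lemma~\ref{LemLinear07} with $n=12$ and $m=6$ (which is even) to the present $G$, $v$, $W$ gives $\GaSp_6(q^2)\cap K=\Sp_4(q^2)$; moreover, inspecting that proof, which runs through Lemmas~\ref{LemLinear05} and~\ref{LemLinear06}, shows that this $\Sp_4(q^2)$ is the pointwise stabilizer of a nondegenerate $2$-subspace of $V_\sharp$, i.e.\ $\Sp_4(q^2)<\N_2[\Sp_6(q^2)]$ in the notation of Lemma~\ref{LemLinear08}. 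Since every element of $\Sp_4(q^2)\leqslant\SL(V_\sharp)$ is $\bbF_{q^2}$-linear, whereas every element of $\GaG_2(q^2)$ outside $\G_2(q^2)$ is not, we get $H\cap\Sp_4(q^2)=\G_2(q^2)\cap\Sp_4(q^2)$. Finally, Lemma~\ref{LemLinear08} applied with $q$ replaced by the even prime power $q^2$ (replacing $(H,K)$ by a suitable conjugate pair if necessary, which changes nothing by Lemma~\ref{LemXia04}) gives $\G_2(q^2)\cap\Sp_4(q^2)=\SL_2(q^2)$, so $H\cap K=\SL_2(q^2)$.

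With $H\cap K$ determined, the factorization follows from an order count, exactly as in Lemmas~\ref{LemLinear06} and~\ref{LemLinear07}. Using $|\SiL_k(q)|=f\,|\SL_k(q)|$ and $|\G_2(Q)|=Q^6(Q^2-1)(Q^6-1)$, one computes
\[
\frac{|G|}{|K|}=\frac{|\SiL_{12}(q)|}{|\SiL_{11}(q)|}=q^{11}(q^{12}-1),\qquad
\frac{|H|}{|H\cap K|}=\frac{2f\,q^{12}(q^4-1)(q^{12}-1)}{q^2(q^4-1)}=2f\,q^{10}(q^{12}-1),
\]
and since $q\in\{2,4\}$ forces $q=2f$, these two numbers coincide. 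Hence $G=HK$, and consequently $Z=\ov{G}=\ov{H}\,\ov{K}=XY$.

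The heavy lifting all sits inside Lemmas~\ref{LemLinear05}--\ref{LemLinear08}, so the points needing care are purely bookkeeping across the field extension: confirming that the copy of $\Sp_4(q^2)$ delivered by Lemma~\ref{LemLinear07} is genuinely of nondegenerate-subspace type (so that Lemma~\ref{LemLinear08} applies), noting that no element of $\GaG_2(q^2)$ outside $\G_2(q^2)$ is $\bbF_{q^2}$-linear (so none can lie in $\Sp_4(q^2)$), and aligning the conjugacy-class representatives along the chain of subgroups. As always in this range, the numerical identity comes down to the coincidence $q=2f$ for $q\in\{2,4\}$.
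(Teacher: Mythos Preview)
Your proof is correct and follows essentially the same template as the paper's: identify $Z$, $X$, $Y$, compute $H\cap K$ by sandwiching through an intermediate subgroup, then finish with the order count using $q=2f$. The only difference is cosmetic: the paper factors through $\SiL_6(q^2)$ and cites Lemmas~\ref{LemLinear06} and~\ref{LemLinear09} (obtaining $H\cap K=H\cap\SL_5(q^2)=\SL_2(q^2)$), whereas you factor through $\GaSp_6(q^2)$ and cite Lemmas~\ref{LemLinear07} and~\ref{LemLinear08}; since \ref{LemLinear07} is proved from \ref{LemLinear06}+\ref{LemLinear05} and \ref{LemLinear09} from \ref{LemLinear05}+\ref{LemLinear08}, the two routes unwind to the same ingredients in a different order.
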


\begin{proof}
It is clear that $Z=\PSL_6(q)$, $X=\GaG_2(q^2)$ and $Y\cong K=\SiL_{11}(q)$.
By Lemmas~\ref{LemLinear06} and~\ref{LemLinear09} we have
\[
H\cap K=H\cap(\SiL_6(q^2)\cap K)=H\cap\SL_5(q^2)=\SL_2(q^2).
\]
Observe that $q=2f$ as $q\in\{2,4\}$. Thus
\[
\frac{|G|}{|K|}=\frac{|\SiL_{12}(q)|}{|\SiL_{11}(q)|}=q^{11}(q^{12}-1)=2fq^{10}(q^{12}-1)=\frac{|\GaG_2(q^2)|}{|\SL_2(q^2)|}=\frac{|H|}{|H\cap K|},
\]
and so $G=HK$, which implies $Z=\overline{G}=\overline{H}\,\overline{K}=XY$.
\end{proof}

\begin{lemma}\label{LemLinear18}
Let $G=\SL(V){:}\langle\phi\gamma\rangle$ with $n=12$ and $q\in\{2,4\}$, let $H=\G_2(q^2){:}\langle\psi\gamma\rangle<\Sp_6(q^2){:}\langle\psi\gamma\rangle<G$, let $K=\SL_{n-1}(q){:}\langle\phi\gamma\rangle<G$, let $Z=\overline{G}$, let $X=\overline{H}$, and let $Y=\overline{K}$. 
Then $H\cap K=\SL_2(q^2)$, and $Z=XY$ with $Z=\PSL_{12}(q).2$, $X=\G_2(q^2).(2f)\cong\GaG_2(q^2)$ and $Y\cong K=\SL_{11}(q).2$.
\end{lemma}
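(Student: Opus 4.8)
\emph{Proof proposal.}
The plan is to follow the strategy of the $\SiL$-analogue, Lemma~\ref{LemLinear15}, together with its symplectic companion Lemma~\ref{LemLinear17}, exploiting the subgroup chain $H=\G_2(q^2){:}\langle\psi\gamma\rangle<\SL(V_\sharp){:}\langle\psi\gamma\rangle<G$. First I would record the routine identifications $Z=\overline G=\PSL_{12}(q).2$, $Y\cong K=\SL_{11}(q).2$, and $X=\overline H=\G_2(q^2).(2f)$; the last is isomorphic to $\GaG_2(q^2)$ because $p\neq3$, so that $\Out(\G_2(q^2))$ consists only of field automorphisms and has order $2f$. These are immediate from the definitions, since $\G_2(q^2)$ (being simple inside $\SL_6(q^2)$) and $\langle\psi\gamma\rangle$ both meet the scalar subgroup of $G$ trivially. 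This reduces the lemma to establishing $H\cap K=\SL_2(q^2)$ and then to a single index comparison.

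For the intersection I would work down the chain. Applying Lemma~\ref{LemLinear16} with $m=6$ shows that $(\SL(V_\sharp){:}\langle\psi\gamma\rangle)\cap K=\SL_5(q^2)$, realised as the stabiliser in $\SL(V_\sharp)$ of a nonzero vector and a complementary hyperplane of $V_\sharp$; in particular this intersection lies inside $\SL(V_\sharp)$, so the outer part $\langle\psi\gamma\rangle$ contributes nothing and
\[
H\cap K=\bigl(\G_2(q^2){:}\langle\psi\gamma\rangle\bigr)\cap\SL_5(q^2)=\G_2(q^2)\cap\SL_5(q^2).
\]
Now Lemma~\ref{LemLinear09}, applied over $\bbF_{q^2}$ (legitimate, since $q^2$ is even), provides a factorisation $\SL_6(q^2)=\G_2(q^2)\cdot\SL_5(q^2)$ with intersection $\SL_2(q^2)$; combined with Lemma~\ref{LemXia04}, which keeps the intersection fixed up to isomorphism when either factor is conjugated, this yields $H\cap K=\SL_2(q^2)$.

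For the index comparison, since $q\in\{2,4\}$ we have $q=2f$, and hence
\[
\frac{|G|}{|K|}=\frac{|\SL_{12}(q).2|}{|\SL_{11}(q).2|}=q^{11}(q^{12}-1)=2f\,q^{10}(q^{12}-1)=\frac{|\G_2(q^2).(2f)|}{|\SL_2(q^2)|}=\frac{|H|}{|H\cap K|},
\]
so that $|HK|=|H|\,|K|/|H\cap K|=|G|$ and therefore $G=HK$; passing modulo scalars then gives $Z=\overline G=\overline H\,\overline K=XY$. The one step here that is not pure bookkeeping is the conjugacy matching: one must be sure that the vector–hyperplane stabiliser of $\SL(V_\sharp)$ handed to us by Lemma~\ref{LemLinear16} lies in the $\SL_6(q^2)$-conjugacy class for which Lemma~\ref{LemLinear09} supplies a factorisation with $\G_2(q^2)$. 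I expect this to be the main (and essentially the only) obstacle, and it is dispatched either by the standard transitivity of $\SL(V_\sharp)$ on pairs consisting of a nonzero vector together with a complementary hyperplane, combined with Lemma~\ref{LemXia04}, or --- mimicking the proof of Lemma~\ref{LemLinear09} itself --- by routing the intersection through $\Sp(V_\sharp)$ via Lemmas~\ref{LemLinear05} and~\ref{LemLinear08}, using that $\Sp(V_\sharp)$ is transitive on hyperbolic pairs.
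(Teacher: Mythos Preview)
Your proposal is correct and follows essentially the same route as the paper: first apply Lemma~\ref{LemLinear16} (with $m=6$) to obtain $(\SL(V_\sharp){:}\langle\psi\gamma\rangle)\cap K=\SL_5(q^2)$, then apply Lemma~\ref{LemLinear09} over $\bbF_{q^2}$ to get $H\cap K=\G_2(q^2)\cap\SL_5(q^2)=\SL_2(q^2)$, and finish with the index computation exactly as in Lemma~\ref{LemLinear15}. The paper does precisely this, only more tersely; your extra discussion of the conjugacy matching (handled via transitivity of $\SL(V_\sharp)$ on antiflags and Lemma~\ref{LemXia04}) is a fair elaboration of a point the paper leaves implicit.
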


\begin{proof}
It is clear that $Z=\PSL_{12}(q).2$, $X=\G_2(q^2).(2f)\cong\GaG_2(q^2)$ and $Y\cong K=\SL_{11}(q).2$.
By Lemmas~\ref{LemLinear16} and~\ref{LemLinear09} we have
\[
H\cap K=H\cap((\SL_6(q^2){:}\langle\psi\gamma\rangle)\cap K)=H\cap\SL_5(q^2)=\SL_2(q^2).
\]
Then similarly as in the proof of Lemma~\ref{LemLinear15} we obtain $Z=XY$.
\end{proof}

\section{Proof of Theorem~\ref{ThmLinear}}

Let $G$ be an almost simple group with socle $L=\PSL_n(q)$, and let $H$ and $K$ be nonsolvable subgroups of $G$ not containing $L$.
In Sections~\ref{SecLinear01} and~\ref{SecLinear02} it is shown that all pairs $(X,Y)$ in Table~\ref{TabLinear} are factor pairs of $L$.
Hence by Lemma~\ref{LemXia02} we only need to prove that, if $G=HK$, then $(H,K)$ tightly contains $(X^\alpha,Y^\alpha)$ for some $(X,Y)$ in Table~\ref{TabLinear} and $\alpha\in\Aut(L)$.
Suppose that $G=HK$. Then by~\cite[Theorem~3.1]{LWX} the triple $(L,H^{(\infty)},K^{(\infty)})$ lies in Table~\ref{TabInftyLinear}.

\begin{table}[htbp]
\captionsetup{justification=centering}
\caption{$(L,H^{(\infty)},K^{(\infty)})$ for linear groups}\label{TabInftyLinear}
\begin{tabular}{|l|l|l|l|l|l|}
\hline
Row & $L$ & $H^{(\infty)}$ & $K^{(\infty)}$ & Conditions\\
\hline
1 & $\PSL_n(q)$ & $\lefthat\SL_a(q^b)$, $\lefthat\Sp_a(q^b)'$ & $q^{n-1}{:}\SL_{n-1}(q)$ & $n=ab$\\
2 & $\PSL_n(q)$ & $\G_2(q^b)'$ & $q^{n-1}{:}\SL_{n-1}(q)$ & $n=6b$, $q$ even\\
\hline
3 & $\PSL_n(q)$ & $\lefthat\Sp_n(q)'$ & $\SL_{n-1}(q)$ & \\
4 & $\SL_n(2)$ & $\SL_{n/2}(4)$, $\Sp_{n/2}(4)$ & $\SL_{n-1}(2)$ & \\
5 & $\PSL_n(4)$ & $\lefthat\SL_{n/2}(16)$, $\Sp_{n/2}(16)$ & $\SL_{n-1}(4)$ & \\
6 & $\PSL_6(q)$ & $\G_2(q)'$ & $\SL_5(q)$ & $q$ even\\
\hline
7 & $\PSL_2(9)$ & $\PSL_2(5)$ & $\A_5$ & \\
8 & $\PSL_3(4)$ & $\PSL_2(7)$ & $\A_6$ & \\
9 & $\PSL_4(2)$ & $\SL_3(2)$, $2^3{:}\SL_3(2)$ & $\A_7$ & \\
10 & $\PSL_4(3)$ & $\A_5$, $2^4{:}\A_5$ & $3^3{:}\SL_3(3)$ & \\
11 & $\PSL_6(3)$ & $\PSL_2(13)$ & $3^5{:}\SL_5(3)$ & \\
12 & $\SL_{12}(2)$ & $\G_2(4)$ & $\SL_{11}(2)$ & \\
13 & $\PSL_{12}(4)$ & $\G_2(16)$ & $\SL_{11}(4)$ & \\
\hline
\end{tabular}
\vspace{3mm}
\end{table}

If $(L,H^{(\infty)},K^{(\infty)})$ lies in Rows~1--3 of Table~\ref{TabInftyLinear}, then $(H,K)$ tightly contains $(X^\alpha,Y^\alpha)$ for some pair $(X,Y)$ in Rows~1--3 of Table~\ref{TabLinear} and $\alpha\in\Aut(L)$.

For $(L,H^{(\infty)},K^{(\infty)})$ in Row~6 of Table~\ref{TabInftyLinear}, viewing the remark after Lemma~\ref{LemLinear09}, we see that $(H,K)$ tightly contains the pair $(X,Y)=(\G_2(q),\SL_5(q))$ in Row~8 of Table~\ref{TabLinear}.

If $(L,H^{(\infty)},K^{(\infty)})$ lies in Rows~7--11 of Table~\ref{TabInftyLinear}, then computation in \magma~\cite{BCP1997} shows that $(H,K)$ tightly contains $(X^\alpha,Y^\alpha)$ for some pair $(X,Y)$ in Rows~9--13 of Table~\ref{TabLinear} and $\alpha\in\Aut(L)$.

Finally, if $(L,H^{(\infty)},K^{(\infty)})$ lies in Rows~4--5 or~12--13 of Table~\ref{TabInftyLinear}, then the following lemma shows that $(H,K)$ tightly contains $(X^\alpha,Y^\alpha)$ for some pair $(X,Y)$ in Rows~4--7 or~14--15 of Table~\ref{TabLinear} and $\alpha\in\Aut(L)$.

\begin{lemma}
Suppose that $K^{(\infty)}=\SL_{n-1}(q)$ with $q\in\{2,4\}$, and either $H^{(\infty)}=\SL_{n/2}(q^2)$ or $\Sp_{n/2}(q^2)$, or $H^{(\infty)}=\G_2(q^2)$ with $n=12$. Then $(H,K)$ tightly contains $(X^\alpha,Y^\alpha)$ for some pair $(X,Y)$ in Rows~\emph{4--7} or~\emph{14--15} of Table~$\ref{TabLinear}$ and $\alpha\in\Aut(L)$.
\end{lemma}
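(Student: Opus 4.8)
The plan is to reduce everything to the factorizations of the subgroup $\SL_{n-1}(q)$-complement structure already established in Lemmas~\ref{LemLinear06}--\ref{LemLinear07} and~\ref{LemLinear15}, by first pinning down the almost simple group $G$ up to the relevant outer automorphisms, and then matching the pair $(H,K)$ against the candidate pairs in Rows~4--7, 14--15 of Table~\ref{TabLinear}. First I would set $d=(n/2,q-1)$ (so $d=1$ when $q=2$ and $d=(n/2,3)$ when $q=4$), observe that by Lemma~\ref{LemXia04} we may replace $(H,K)$ by a conjugate under $\Aut(L)$ so that $K^{(\infty)}=\SL(V)_{v,W}/(\text{scalars})$ and $H^{(\infty)}$ is one of $\SL(V_\sharp)$, $\Sp_m(q^2)'$, or $\G_2(q^2)'$ (with $m=n/2$), using the fact that up to conjugacy in $\PGaL_n(q)$ there is a unique such subfield subgroup stabilizing the decomposition $V=V_\sharp$ and a unique point-stabilizer-type subgroup of the required shape. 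Since $H$ and $K$ are nonsolvable with the prescribed socles, $H$ contains $\overline{H^{(\infty)}}$ and $K$ contains $\overline{K^{(\infty)}}$.

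Next I would analyze $G/L$. Because $G=HK$, Lemma~\ref{LemXia01} gives $G/L=(HL/L)(KL/L)$; since $G/L\le\Out(L)$ is solvable with the well-known structure (a cyclic field-automorphism part, the order-two graph part, and the diagonal part which is trivial or of order $d$ here), and since $K^{(\infty)}=\SL_{n-1}(q)$ forces $KL/L$ to avoid the graph automorphism entirely (a parabolic-type subgroup $\SL_{n-1}(q)$ is not normalized by $\gamma$), the graph automorphism, if present in $G$, must come from $HL/L$. This is exactly why the candidate $Z$ in Rows~5,7,15 carries a $.2$ built from $\psi\gamma$ rather than a pure field automorphism, and why in Rows~4,6,14 there is no graph automorphism at all. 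The field-automorphism and diagonal parts I would handle by the same token: $K$ already supplies $\SiL_{n-1}(q)$, so $KL/L$ realizes the full field-automorphism group of $L$, hence $HL/L$ need only cover the graph part. Putting these together, $\langle L, H^{(\infty)}, K^{(\infty)}\rangle$ is forced to be exactly the group $Z$ listed in the corresponding row, and $(H,K)$ tightly contains a conjugate of $(X,Y)$ where $(X,Y)$ is read off from the row: $X=\overline{H^{(\infty)}}$ together with whatever outer automorphisms $H$ must contain to make $G/L=(HL/L)(KL/L)$ hold, and $Y=\overline{K^{(\infty)}}$ enlarged similarly. The equality $H^{(\infty)}=X^{(\infty)}$ and $K^{(\infty)}=Y^{(\infty)}$ is immediate from the construction since $X,Y$ in those rows have the same socle/derived-subgroup data as the entries of Table~\ref{TabInftyLinear}.

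For the verification that $Z=XY$ actually holds for each such $(X,Y)$ — which is what licenses the application of Lemma~\ref{LemXia02} to conclude $G=HK$ — I would simply cite the already-proved Lemmas~\ref{LemLinear06}, \ref{LemLinear07}, \ref{LemLinear16}, \ref{LemLinear17}, \ref{LemLinear15}, \ref{LemLinear18}, which cover precisely Rows~4--7 and 14--15. Concretely: if $q=2$ then $(n/2,q-1)=1$ and $G\le\SL_n(2).2$, with $Z=\SL_n(2)$ (Row~4) or $\SL_n(2).2$ (Row~5) according as $\gamma\in G$ or not, and similarly for $n=12$ (Rows~14--15); if $q=4$ then $\Out(L)$ has order $4d$ with $d=(n/2,3)$, $G$ lies between $L$ and $\PGaL_n(4)$, and $Z$ is $\PSiL_n(4)$ (Row~6) or $\PSL_n(4).2$ (Row~7), again the $.2$ being present exactly when $\gamma$ contributes to the factorization, with Rows~14--15 the $n=12$ specializations carrying the $\G_2$-factor.

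The main obstacle I expect is the bookkeeping around the diagonal and graph automorphisms when $q=4$ and $d=3$: one must check that the quotient by the scalars interacts correctly with the subfield subgroup $\SL_{n/2}(16)$ (whence the $/(m,3)$ appearing in $X$ in Rows~6--7 and the absence of the extra diagonal in $Z$), and that the choice of representative for the graph automorphism $\gamma$ can be taken to normalize $\SL(V_\sharp)$ simultaneously with stabilizing the chosen antiflag $(v,W)$ — this is the content of the ``$W$ is the image of $\langle v\rangle$ under $\gamma$'' setup in Lemma~\ref{LemLinear16} and is what makes the intersection $H\cap K$ collapse to $\SL_{m-1}(q^2)$ rather than something larger. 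Once these compatibilities are in place the argument is a direct assembly from the cited lemmas together with Lemma~\ref{LemXia02}.
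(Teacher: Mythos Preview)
There is a genuine gap. The heart of the lemma is to prove that $H$ \emph{must} contain an element projecting to the full field automorphism $\psi$ (or to $\psi\gamma$), not merely to $\psi^2$; equivalently, that $H\nleqslant\PGL(V_\sharp).(\langle\psi^2\rangle\times\langle\gamma\rangle)$. Your argument via the quotient $G/L=(HL/L)(KL/L)$ cannot establish this. Consider the case $q=2$, $G=L=\SL_n(2)$: then $G/L$ is trivial and the quotient factorization gives no information whatsoever, yet one still has to prove $H\geqslant\SL_{n/2}(4){:}\langle\psi\rangle=\SiL_{n/2}(4)$. Indeed a direct count (cf.\ Lemma~\ref{LemLinear06}) shows $|\SL_{n/2}(4)|\,|\SL_{n-1}(2)|<|\SL_n(2)|\,|\SL_{n/2-1}(4)|$, so $H^{(\infty)}K^{(\infty)}\neq L$; the missing factor of $2$ has to come from $\psi\in H$, and it is precisely this that needs proof. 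The paper supplies it by an antiflag-transitivity argument: since $K$ lies in an antiflag stabilizer, $G=HK$ forces $H$ to be antiflag-transitive, and one exhibits two explicit antiflags that no element of $\GL(V_\sharp).\langle\psi^2\rangle$ can interchange, yielding the required contradiction.

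Two of your intermediate claims are also incorrect. First, ``$K^{(\infty)}=\SL_{n-1}(q)$ forces $KL/L$ to avoid the graph automorphism'' is false: with $W$ chosen as the image of $\langle v\rangle_{\bbF_q}$ under $\gamma$, the element $\phi\gamma$ normalizes $\SL(V)_{v,W}$, and indeed the $Y$ in Rows~5,~7,~15 is $\SL_{n-1}(q).\langle\phi\gamma\rangle$. Second, you assert that ``$K$ already supplies $\SiL_{n-1}(q)$'', but the hypothesis only gives $K^{(\infty)}=\SL_{n-1}(q)$; whether $K$ contains field automorphisms is part of what must be determined. Finally, your closing paragraph citing Lemmas~\ref{LemLinear06}--\ref{LemLinear18} to ``license the application of Lemma~\ref{LemXia02} to conclude $G=HK$'' reverses the logical direction: $G=HK$ is the hypothesis here, and those lemmas were already used earlier to handle the converse implication of Theorem~\ref{ThmLinear}.
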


\begin{proof}
First suppose that $H\leqslant\PGL(V_\sharp).(\langle\psi^2\rangle\times\langle\gamma\rangle)$.
Since $K$ is contained in an antiflag stabilizer of $G$, we derive from~$G=HK$ that $H$ is antiflag-transitive, and so is $\PGL(V_\sharp).(\langle\psi^2\rangle\times\langle\gamma\rangle)$.
Consequently, $\PGL(V_\sharp).\langle\psi^2\rangle$ is antiflag-transitive.
In particular, there exist $h\in\GL(V_\sharp)$ and $i\in\{0,1\}$ such that $\psi^{2i}h$ sends the antiflag $\{\langle v\rangle_{\bbF_q},\langle\lambda v,U\rangle_{\bbF_q}\}$ to $\{\langle\lambda^2v\rangle_{\bbF_q},\langle v,U\rangle_{\bbF_q}\}$, where $v=v_1$ and $U=\langle v_2,\lambda v_2,\dots,v_{n/2},\lambda v_{n/2}\rangle_{\bbF_q}$.
This implies that $v^{\psi^{2i}h}\in\langle\lambda^2v\rangle_{\bbF_q}$ and $(\lambda v)^{\psi^{2i}h}\in\langle v,U\rangle_{\bbF_q}$.
Hence $v^h=v^{\psi^{2i}h}=\mu\lambda^2v$ for some $\mu\in\bbF_q$, and then
\[
\mu\lambda^{qi+2}v=\lambda^{qi}(\mu\lambda^2v)=\lambda^{qi}v^h=(\lambda v)^{\psi^{2i}h}\in\langle v,U\rangle_{\bbF_q},
\] 
which leads to $\lambda^{qi+2}\in\bbF_q$, a contradiction.

Therefore, up to a conjugation of some $\alpha\in\Aut(L)$ on $H$ and $K$ at the same time, we have $H\geqslant H^{(\infty)}.\langle\psi\rangle$ or $H^{(\infty)}.\langle\psi\gamma\rangle$. 
Applying this conclusion to the factorization $HL\cap KL=(H\cap KL)(K\cap HL)$ we obtain that $H\cap KL\geqslant H^{(\infty)}.\langle\psi\rangle$ or $H^{(\infty)}.\langle\psi\gamma\rangle$.
Hence either $H\geqslant H^{(\infty)}.\langle\psi\rangle$ and $K\geqslant K^{(\infty)}.\langle\phi\rangle$, or $H\geqslant H^{(\infty)}.\langle\psi\gamma\rangle$ and $K\geqslant K^{(\infty)}.\langle\phi\gamma\rangle$.
This means that $(H,K)$ tightly contains the pair $(X,Y)$ in Rows~4--7 or~14--15 of Table~\ref{TabLinear}.
\end{proof}

\section*{Acknowledgments}
The first author acknowledges the support of NNSFC grants no.~11771200 and no.~11931005. The second author acknowledges the support of NNSFC grant no.~12061083.

\end{document}